\newtheorem{Def}{Definition}
\newtheorem{Thm}{Theorem}
\newtheorem{Ass}{Assumption}
\newtheorem{Lem}{Lemma}
\newtheorem{Rem}{Remark}
\newtheorem{Cor}{Corollary}
\newcommand{\R}{\ensuremath{\mathbb{R}}}
\newcommand{\al}{\ensuremath{\alpha}}
\newcommand{\de}{\ensuremath{\delta}}
\newcommand{\ep}{\ensuremath{\varepsilon}}
\newcommand{\om}{\ensuremath{\omega}}
\newcommand{\la}{\ensuremath{\lambda}}
\newcommand{\La}{\ensuremath{\Lambda}}
\newcommand{\De}{\ensuremath{\Delta}}
\numberwithin{equation}{section}
\newcommand{\set}[2]{\ensuremath{\left\{ #1 \ \big| \ #2\right\}}}
\newcommand{\half}{\ensuremath{\frac{1}{2}}}
\newcommand{\pd}{\ensuremath{\partial}}
\newcommand{\lr}[2]{\ensuremath{\left\langle #1 , #2 \right\rangle }}
\newcommand{\pom}{\ensuremath{\phi_{\om}}}
\newcommand{\lpom}[1]{\ensuremath{\phi_{\om+ #1 }}}
\newcommand{\ppom}{\ensuremath{\pd_{\om}\phi_{\om}}}
\newcommand{\lppom}[1]{\ensuremath{\pd_{\om}\phi_{\om+ #1 }}}
\newcommand{\Tpom}{\ensuremath{T'(0)\phi_{\om}}}
\newcommand{\Som}{\ensuremath{S_{\om}}}
\newcommand{\Hom}{\ensuremath{S_{\om}''(\pom)}}
\title
{Stability of bound states of Hamiltonian PDEs in the degenerate cases}
\author{Masaya Maeda${}^{\dagger}$}
\date{}
\begin{document}

\maketitle

\vskip-5mm
\centerline{${}^{\dagger}$Institute of Mathematics, 
Tohoku University, }
\centerline{Sendai, 980-8578, Japan}

\begin{abstract}
We consider a Hamiltonian systems which is invariant under a one-parameter unitary group.
We give a criterion for the stability and instability of bound states for the degenerate case.
We apply our theorem to the single power nonlinear Klein-Gordon equation and the double power nonlinear Schr\"odinger equation.
\end{abstract}

\section{Introduction}
In this paper, following a celebrated paper \cite{GSS} by Grillakis, Shatah and Strauss, we consider the abstract Hamiltonian system of the form
\begin{eqnarray}\label{eq:intro}
\frac{du}{dt}(t)=JE'(u),
\end{eqnarray}
where $E$ is the energy functional on a real Hilbert space $X$, and $J:X^*\rightarrow Y^*$ is a skew-symmetric operator.
Here, $Y$ is another real Hilbert space and $u\in C({\cal I}, X)\cap C^1({\cal I},Y^*)$ for some interval ${\cal I}$.
Equation (\ref{eq:intro}) can be considered as a generalization of nonlinear Schr\"odinger equations (NLS) and nonlinear Klein-Gordon equations (NLKG).
We assume that $E$ is invariant under a one-parameter unitary group $\{T(s)\}_{s\in\R}$.
We consider the stability and instability of bound state solutions $T(\om t)\phi_{\om}$ of (\ref{eq:intro}), where $\om\in \R$ and $\phi_{\om}\in X$.
We assume that the linearized Hamiltonian
\begin{eqnarray*}
S_{\om}''(\pom):=E''(\phi_{\om})-\om Q''(\phi_{\om})
\end{eqnarray*} 
has one negative eigenvalue, where $Q$ is the invariant quantity which comes out from the Noether's principal due to the symmetry $T(s)$.

In \cite{GSS}, it is proven that if $d''(\om)>0$ (resp.\ $<0$), then the bound state $T(\om t)\pom$ is stable (resp.\ unstable), where
\begin{eqnarray*}
d(\om):=E(\pom)-\om Q(\pom).
\end{eqnarray*}
Further, Theorem 2 of \cite{GSS} claims that ``bound states are stable if and only if $d$ is strictly convex in a neighborhood of $\om$''.
However, as pointed out by Comech and Pelinovsky \cite{CP03}, their argument seems to be not correct for the case $d''(\om)=0$.
Our aim of this paper is to recover this criterion, i.e.\ investigate the stability and instability for the case $d''(\om)=0$.

For the case $d''(\om)=0$, Comech and Pelinovsky \cite{CP03} showed that if $d''(\tilde{\om})\leq 0$ in a one-sided open neighborhood of $\om$, then the bound state $T(\om t)\phi_{\om}$ is unstable.
Their proof is based on the observation that in the case $d''(\om)=0$, the linearized operator $J\Hom$ has a degenerate zero eigenvalue which leads to a polynomial growth of perturbations.
They show the instability by considering (\ref{eq:intro}) as a perturbation of the linearized equation around $\phi_{\om}$.
Recently, Ohta \cite{Ohta11} gave another proof for the instability of bound states for the case $d''(\om)=0$, $d'''(\om)\neq 0$.
His proof is based on \cite{GSS} and \cite{M10} which uses a Lyapunov functional to ``push out'' the solutions from the neighborhood of the bound states.
However, \cite{Ohta11} assumes $T'(0)=J$ and this assumption prevent his result to apply to the NLKG equations.

In this paper, we follow the work of \cite{GSS}, \cite{M10} and \cite{Ohta11} and extend the results of \cite{GSS} and \cite{Ohta11}.
We show that, if $d''(\om)$ is strictly convex in a neighborhood of $\om$, then the bound is stable and if $d(\tilde{\om})-d(\om)-(\tilde{\om}-\om) d'(\om)<0$ in $\om<\tilde{\om}<\om+\ep$ or $\om-\ep<\tilde{\om}<\om$ for some $\ep>0$, then the bound state is unstable.
For the meaning of assumption ``$d(\tilde{\om})-d(\om)-(\tilde{\om}-\om) d'(\om)<0$'', consider the following three conditions.

\begin{enumerate}
\item[$(\mathrm{A})$]
$\exists \ep>0$ s.t.\ $\forall\la \in (0,\ep)$ (resp. $\forall\la\in (-\ep,0)$), $d''(\om+\la)<0$.
\item[$(\mathrm{B})$]
$\exists \ep>0$ s.t.\ $\forall\la \in (0,\ep)$ ($\forall\la\in (-\ep,0)$), $d(\om+\la)-d(\om)+\la d'(\om)<0$.
\item[$(\mathrm{C})$]
$\exists \{\la_n\}$ s.t.\ $\la_n\rightarrow 0$ and $d''(\om+\la_n)<0$.
\end{enumerate}
Then, we have (A)$\Rightarrow $(B)$\Rightarrow $(C) and (C) is equivalent to ``$d$ is not convex in the neighborhood of $\om$''.
Therefore, our assumption, which is condition (B), do not cover the case ``$d$ is not convex in the neighborhood of $\om$'', but the gap can considered to be small.
If $d''(\om)=0$ and $d'''(\om)\neq 0$, then we have (A).
So, our result covers the result of \cite{Ohta11}.
The only natural case which we cannot treat in our theorem is the case $d$ is linear in a one-sided open neighborhood of $\om$.
In this sense we have almost proved the criterion ``bound states are stable if and only if $d(\om)$ is strictly convex''.

The proof is based on a purely variational argument.
We note that our result almost covers the result of \cite{CP03} but not completely.
The case $d$ is linear in the neighborhood of $\om$ is excluded by our theorem, which this case can be covered by \cite{CP03} .
However, our proof requires less regularity for $E$, which is $E\in C^2$ and does not need an assumption for nonlinearity like Assumption 2.10 (b), (c) of \cite{CP03}.

We give an application of our theorem for the single power NLKG equations and double power nonlinear Schr\"odinger equations.
For the one dimensional NLKG with $|u|^{p-1}u$, $1<p<2$, our result seems to be new.
Further, we remark our result covers all dimensions in a unified way.

This paper is organized as follows:
In section \ref{sec:nr}, we formulate our assumptions and the main results in a precise manner.
In section \ref{sec:p}, we prepare some notations and lemmas for the proof of the main results.
In particular, we construct a curve $\Psi(\la)$ on the hyper-surface ${\cal M}:=\{Q(u)=Q(\pom)\}$, which crosses the set of the bound state.
Then, we calculate $S_{\om}(\Psi(\la))$ and $P(\Psi(\la))$, where $P$ is a functional which we will use for the instability.
This curve $\Psi(\la)$ corresponds to the degenerate direction of the energy functional $E$ on the hyper-surface ${\cal M}$.
In section \ref{sec:pt}, we prove the main results.
We calculate $S_{\om}$ and $P$ for general $u$ in a neighborhood of $\pom$ under some restrictions on the value of $S_{\om}$.
The restrictions give us a good estimate for the ``nondegenerate'' directions and enables us to use the results of section {\ref{sec:p}}.
In section \ref{sec:e}, we give an applications of the main theorem for NLKG and NLS equations.

\section{Notation and main results}\label{sec:nr}

Let $X$, $Y$ and $H$ be a real Hilbert spaces such that
\begin{eqnarray*}
X\hookrightarrow H \simeq H^* \hookrightarrow X^*,\ Y\hookrightarrow H \simeq H^* \hookrightarrow Y^*,
\end{eqnarray*}
where all the embeddings are densely continuous.
We identify $H$ with $H^*$ naturally.
We denote both the inner product of  $H$, the coupling between $X$ and $X^*$ and the coupling between $Y$ and $Y^*$ by $\lr{\cdot}{\cdot}$.
The norms of $X$ and $H$ are denoted as $||\cdot||_X$ and $||\cdot||_H$, respectively.
Let $J:H\rightarrow H$ be a skew-symmetric operator in such a sense that
\begin{eqnarray*}
\lr{Ju}{v}=-\lr{u}{Jv},\ u,v\in H.
\end{eqnarray*}
Further, we assume $\left.J\right|_X:X\rightarrow Y$ and $\left. J\right|_Y:Y\rightarrow X$ are bijective and bounded.
The operator $J$ can be naturally extended to $\tilde{J}:X^*\rightarrow Y^*$ by
\begin{eqnarray*}
\lr{\tilde{J}u}{v}:=-\lr{u}{Jv},\ u\in X^*,\ v\in Y.
\end{eqnarray*}
Let $T(s)$ be a one parameter unitary group on $X$ and let $T'(0)$ is the generator of $T(s)$.
We denote the domain of $T'(0)$ by $D(T'(0))\subset X$.
As $J$, we can naturally extend $T(s)$ to $\tilde{T}(s):X^*\rightarrow X^*$ by
\begin{eqnarray*}
\lr{\tilde{T}(s)u}{v}:=\lr{u}{T(-s)v},\ u\in X^*,\ v\in X.
\end{eqnarray*}
We assume $\tilde{T}(s)(Y)\subset Y$ for all $s\in\R$.
For simplicity, we just denote $\tilde{T}(s)$ as $T(s)$.
We further assume that $J$ and $T(s)$
 commute.

Let $E\in C^2(X,\R)$.
We consider the following Hamiltonian PDE.
\begin{eqnarray}\label{eq:hpde}
\frac{du}{dt}(t)=\tilde{J}E'(u(t)),
\end{eqnarray}
where $E'$ is the Fr\'echet derivative of $E$.
We say that $u(t)$ is a solution of (\ref{eq:hpde}) in an interval ${\cal I}$ if $u\in C({\cal I}, X)\cap C^1({\cal I},Y^*)$ and satisfies (\ref{eq:hpde}) in $Y^*$ for all $t\in{\cal I}$.
We assume that $E$ is invariant under $T$, that is,
\begin{eqnarray*}
E(T(s)u)=E(u),\ s\in\R,\ u\in X.
\end{eqnarray*}

We assume that there is a bounded operator $B:X\rightarrow X^*$ such that $B^*=B$ and the operator $B$ is an extension of $J^{-1}T'(0)$.
We define $Q:X\rightarrow \R$ by 
\begin{eqnarray}\label{eq:defQ}
Q(u):=\half\lr{Bu}{u},\ u\in X.
\end{eqnarray}
Then, we have $Q(T(s)u)=Q(u)$ for $u\in X$.
Indeed, for $u\in D(T'(0))$, we have
\begin{eqnarray*}
\frac{d}{ds}Q(T(s)u)&=&\lr{BT(s)u}{T'(0)T(s)u}\\
&=&\lr{BT(s)u}{JBT(s)u}=0.
\end{eqnarray*}
For general $u\in X$, we only have to take a sequence $u_n\in D(T'(0))$, $u_n\rightarrow u$ in $X$.
Further, formally $Q$ conserves under the flow of (\ref{eq:hpde}).
Indeed,
\begin{eqnarray*}
\frac{d}{dt}Q(u(t))&=&\lr{Bu(t)}{J^{-1}E'(u(t))}\\
&=&\lr{T'(0)u(t)}{E'(u(t))}\\
&=&\left.\frac{d}{ds}\right|_{s=0}E(T(s)u(t))=0.
\end{eqnarray*}


We now assume that the Cauchy problem of (\ref{eq:hpde}) is well-posed in $X$.

\begin{Ass}[Existence of solutions]\label{ass:sol}
Let $\mu>0$.
Then, there exists $T(\mu)>0$ such that for all $u_0\in X$ with $||u_0||_X\leq \mu$, we have a solution $u$ of $(\ref{eq:hpde})$ in $[0,T(\mu))$ with $u(0)=u_0$.
Further, $u$ satisfies $E(u(t))=E(u_0)$ and $Q(u(t))=Q(u_0)$ for $t\in (0,T(\mu))$.
\end{Ass}

We next define the bound state, which is an stationary solution modulo symmetry $T(s)$.

\begin{Def}[Bound state]
By a bound state we mean a solution of $(\ref{eq:hpde})$ in $\R$ with the form
\begin{eqnarray*}
u(t)=T(\om t)\phi,
\end{eqnarray*}
where $\om\in\R$ and $\phi\in X$.
\end{Def}

\begin{Rem}
If $u(t)=T(\om t)\phi$ is a bound state, then it satisfies
\begin{eqnarray*}
\om T(\om t)T'(0)\phi=JE'(T(\om t)\phi).
\end{eqnarray*}
Thus, by $E'(T(s)u)=T(s)E'(u)$ and the definition of $Q$, we have
\begin{eqnarray}\label{eq:condbddst}
E'(\phi)-\om Q'(\phi)=0.
\end{eqnarray}
On the other hand, if $\phi\in X$ satisfies $(\ref{eq:condbddst})$, then $T(\om t)\phi$ is a bound state.
\end{Rem}

\begin{Def}[Stability of bound states]
We say the bound state $T(\om t)\phi$ is stable if for all $\ep>0$ there exists $\de>0$ with the following property.
If \\$||u_0-\phi||_X<\de$ and $u(t)$ is a solution of $(\ref{eq:hpde})$ given in Assumption $\ref{ass:sol}$, then $u(t)$ can be continued to a solution in $[0,\infty)$ and
\begin{eqnarray*}
\sup_{0<t}\inf_{s\in\R}||u(t)-T(s)\phi||_X<\ep.
\end{eqnarray*}
Otherwise the bound state $T(\om t)\phi$ is said to be unstable.
\end{Def}

\begin{Ass}[Existence of bound states]\label{ass:bddst}
Let $\om_1<\om_2$.
We assume that there exists a $C^3$ map $(\om_1, \om_2)\rightarrow X$, $\om\mapsto \pom$ such that
\begin{enumerate}
\item[$(\mathrm{i})$]
$T(\om t)\pom$ is a bound state.
\item[$(\mathrm{ii})$]
$\pom\in D(T'(0)^3)$, $\ppom\in D(T'(0)^2)$, $\pd_{\om}^2\pom\in D(T'(0))$,\\ $\Tpom, \ppom, T'(0)\ppom, \pd_{\om}^2\pom\in Y$.
\item[$(\mathrm{iii})$]
$T'(0)\pom\neq 0$, $\ppom\neq 0$ and $\lr{T'(0)\pom}{\ppom}=0$.

\end{enumerate}
\end{Ass}

\begin{Rem}
By the fact that $\Tpom\in Y$, we have $B\pom=J^{-1}\Tpom\in X$.
\end{Rem}
\begin{Rem}
$\lr{T'(0)\pom}{\ppom}=0$ is actually not an assumption.
Indeed, suppose $\om\mapsto \pom$ does not satisfy $\lr{T'(0)\pom}{\ppom}=0$.
Then, set $\tilde{\phi}_{\om}=T(s(\om))\pom$, where
\begin{eqnarray*}
s(\om)=-\int_0^{\om}\frac{\lr{T'(0)\phi_{\mu}}{\pd_{\mu}\phi_{\mu}}}{||T'(0)\phi_{\mu}||_H^2}\,d\mu.
\end{eqnarray*}
Then, $\tilde{\phi}_{\om}$ satisfies $\lr{T'(0)\tilde{\pom}}{\pd_{\om}\tilde{\pom}}=0$.
\end{Rem}

Set
\begin{eqnarray}
S_{\om}(u)&:=&E(u)-\om Q(u),\ u\in X,\nonumber\\
d(\om)&:=&S_{\om}(\pom),\label{eq:defd}
\end{eqnarray}
where $\pom$ is given in Assumption \ref{ass:bddst}.

\begin{Rem}
Condition $(\ref{eq:condbddst})$ is equivalent to $S_{\om}'(\phi)=0$.
\end{Rem}

We further assume that the linearized Hamiltonian $\Hom$ satisfies the following spectral condition.

\begin{Ass}[Spectral conditions for the bound states]\label{ass:spec}
For $\om\in (\om_1,\om_2)$, we assume the following.
\begin{enumerate}
\item[$(\mathrm{i})$]
$\mathrm{ker}\Hom=\mathrm{span}\{ T'(0)\pom \}$
\item[$(\mathrm{ii})$]
$\Hom$ has only one simple negative eigenvalue $-\mu<0$.
\item[$(\mathrm{iii})$]
$\inf\set{s>0}{s\in\sigma(\Hom)}>0$,
\end{enumerate}
where $\sigma(\Hom)\subset\R$ is the spectrum of $\Hom$.
\end{Ass}

Grillakis, Shatah and Strauss \cite{GSS} proved the following theorem.

\begin{Thm}\label{thm:gss}
Let Assumptions $\ref{ass:sol}$, $\ref{ass:bddst}$ and $\ref{ass:spec}$ be satisfied.
Then, if $d''(\om)>0$, the bound state $T(\om t)\pom$ is stable and if $d''(\om)<0$, the bound state $T(\om t)\pom$ is unstable.
\end{Thm}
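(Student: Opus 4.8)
The plan is to follow the variational method of \cite{GSS}, with $\Som(u)=E(u)-\om Q(u)$ playing the role of a conserved Lyapunov functional: by Assumption~\ref{ass:sol} both $E$ and $Q$ are conserved along solutions, and $\pom$ is a critical point of $\Som$ since $S_{\om}'(\pom)=0$. I would first record the Hamilton--Jacobi type identities obtained by differentiating $S_{\om}'(\pom)=0$ and $d(\om)=\Som(\pom)$ in $\om$, using $Q'(u)=Bu$, $B\pom=J^{-1}\Tpom$ and the skew-symmetry of $J$:
\[
d'(\om)=-Q(\pom),\qquad \Hom\,\ppom=Q'(\pom),\qquad \lr{\Hom\,\ppom}{\ppom}=\lr{Q'(\pom)}{\ppom}=-d''(\om),
\]
together with $\lr{Q'(\pom)}{\Tpom}=\lr{J^{-1}\Tpom}{\Tpom}=0$, so that $\ppom$ is $\Hom$-orthogonal to the kernel direction $\Tpom$.

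The next step is a spectral decomposition of $\Hom$. Since $\lr{Q'(\pom)}{\ppom}=-d''(\om)\neq 0$, the vectors $\ppom$, $\Tpom$ and the subspace $N:=\{\,v\in X \mid \lr{Q'(\pom)}{v}=0,\ \lr{\Tpom}{v}=0\,\}$ should give a topological direct sum $X=\R\ppom\oplus\R\Tpom\oplus N$, and for $w=a\ppom+b\Tpom+v$ all cross terms in $\lr{\Hom w}{w}$ vanish, leaving
\[
\lr{\Hom w}{w}=-a^{2}\,d''(\om)+\lr{\Hom v}{v}.
\]
Using that $\Hom$ has exactly one negative direction, kernel $\R\Tpom$, and a positive spectral gap (Assumption~\ref{ass:spec}), I would deduce the GSS coercivity dichotomy: if $d''(\om)>0$ then $\lr{\Hom v}{v}\geq \de\|v\|_X^{2}$ on $N$ for some $\de>0$; if $d''(\om)<0$ then $\Hom$ has a genuine negative direction inside $N$.

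For the stability case $d''(\om)>0$: given $u_{0}$ with $\|u_{0}-\pom\|_X$ small, I would pick by the implicit function theorem an $\om'$ near $\om$ with $Q(\phi_{\om'})=Q(u_{0})$ — possible because $\pd_{\om}Q(\pom)=-d''(\om)\neq 0$ — and, by minimizing $\|u_{0}-T(s)\phi_{\om'}\|_X$ over $s$, write $u_{0}=T(s)(\phi_{\om'}+v)$ with $v$ orthogonal to $T'(0)\phi_{\om'}$. Since $Q$ is exactly quadratic, $Q(u_{0})=Q(\phi_{\om'})$ then forces $\lr{Q'(\phi_{\om'})}{v}=O(\|v\|_X^{2})$, i.e.\ $v$ is $O(\|v\|_X^{2})$-close to the analogue of $N$ at $\om'$. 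Expanding $S_{\om'}$ about $\phi_{\om'}$ and using $T$-invariance of $E$ and $Q$ gives $E(u_{0})-E(\phi_{\om'})=S_{\om'}(u_{0})-S_{\om'}(\phi_{\om'})=\half\lr{S_{\om'}''(\phi_{\om'})v}{v}+o(\|v\|_X^{2})\geq (\de/3)\|v\|_X^{2}$ for $u_{0}$ close enough, via the coercivity on $N$ (valid near $\om$ by continuity of $d''$). Conservation of $E$ and $Q$ lets me repeat the modulation at each later time $t$ with the \emph{same} $\om'$, so $(\de/3)\|v(t)\|_X^{2}\leq E(u_{0})-E(\phi_{\om'})$, which is as small as we wish; a standard continuity/continuation argument then confines $u(t)$ to a small tube around the orbit for all $t\geq 0$.

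For the instability case $d''(\om)<0$, where $N$ carries a strictly negative direction of $\Hom$, I would follow \cite{GSS} and construct an auxiliary functional $\mathcal{A}$ on a tube around the orbit — built from $B$ and the modulation of $u$ along the orbit — whose derivative along solutions equals $-\lr{\Hom w}{w}$ up to higher-order terms in the transverse component $w$ of $u-T(s)\pom$. The negativity above makes $\dot{\mathcal{A}}$ bounded away from $0$ while $u$ stays near (but off) the orbit; since $\mathcal{A}$ is bounded on the tube and $E,Q$ are conserved, an initial perturbation with nonzero component along the negative direction of $N$ cannot keep the solution in the tube forever, which gives instability. I expect this last step to be the main obstacle: making the modulation parameters $C^{1}$ in $t$, checking that conservation of $E$ and $Q$ (equivalently, staying on $\{Q=Q(\pom)\}\cap\{\Som<d(\om)\}$) does not pin the transverse component $w$ near zero, and controlling the higher-order remainder in $\dot{\mathcal{A}}$ uniformly on the tube, all under the weak hypothesis $E\in C^{2}$. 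In the degenerate regime $d''(\om)=0$ treated in the rest of the paper this decomposition loses its $a^{2}d''(\om)$ term, and this step is to be replaced by the explicit curve $\Psi(\la)$ on $\{Q(u)=Q(\pom)\}$ and the functional $P$ announced in the introduction.
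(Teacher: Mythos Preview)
The paper does not itself prove this theorem; it is stated as the result of Grillakis--Shatah--Strauss \cite{GSS} and quoted for context before the degenerate case $d''(\om)=0$ is taken up. Your outline is a faithful sketch of the original \cite{GSS} argument: the identities $d'(\om)=-Q(\pom)$, $\Hom\ppom=B\pom$, $\lr{\Hom\ppom}{\ppom}=-d''(\om)$; the decomposition $X=\R\ppom\oplus\R\Tpom\oplus N$ with $N=\{Q'(\pom)\}^{\perp}\cap\{\Tpom\}^{\perp}$; the coercivity dichotomy on $N$ driven by the sign of $d''(\om)$; the choice of a nearby $\om'$ with $Q(\phi_{\om'})=Q(u_0)$ for stability; and a Lyapunov functional for instability. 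These are exactly the standard GSS ingredients, so there is no ``paper's own proof'' to compare against here.

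What the paper does instead is remark (after Theorem~\ref{thm:instability}) that its Theorems~\ref{thm:stability} and~\ref{thm:instability} contain Theorem~\ref{thm:gss} as the special case $d''(\om)\neq 0$, since then $\eta_1(\la)\sim\la\eta_2(\la)\sim\la^2$ and strict convexity or one-sided negativity of $\eta_1$ is automatic. The proofs in Sections~\ref{sec:p}--\ref{sec:pt}, however, are written under the standing hypothesis $d''(\om)=0$ and replace your linear decomposition by the curve $\Psi(\la)=\lpom{\la}+\sigma(\la)B\lpom{\la}$ on $\{Q=Q(\pom)\}$, the modulation of Lemma~\ref{lem:modul}, and the functional $P(u)=\lr{S_{\om+\La(u)}'(u)}{JA'(u)}$. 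So the two routes are genuinely different: yours exploits the nondegeneracy of $\lr{\Hom\ppom}{\ppom}$ directly, while the paper's machinery is designed to survive its vanishing.

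One comment on your instability sketch: you are right to flag it as the delicate step. In \cite{GSS} (and in this paper, equations~(\ref{eq:A'}) and~(\ref{eq:AP})) the functional $A$ is built from $J^{-1}\ppom$ and the modulation $\theta(u)$, and one needs the boundedness of $J|_X$, $J|_Y$ and Lemma~\ref{lem:regularity} to make $A'\in Y$, hence $JA'\in X$, so that $\frac{d}{dt}A(u(t))=-P(u(t))$ is rigorous along $C({\cal I},X)\cap C^1({\cal I},Y^*)$ solutions. Your description ``$\dot{\mathcal A}=-\lr{\Hom w}{w}+\text{h.o.t.}$'' is heuristically correct but would need this bookkeeping; it is a sketch rather than a gap.
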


We investigate the case $d''(\om)=0$.

We denote $f(\la)\sim g(\la)$ if $f$ and $g$ satisfies
\begin{eqnarray}\label{eq:defsim}
0<\liminf_{|\la|\rightarrow 0}f(\la)/g(\la)\leq \limsup_{|\la|\rightarrow 0}f(\la)/g(\la)<\infty.
\end{eqnarray}
We assume
\begin{eqnarray}\label{eq:comparable}
d(\om+\la)-d(\om)-\la d'(\om)\sim \la(d'(\om+\la)-d'(\om)).
\end{eqnarray}
This is a technical assumption which we need in the proof.

\begin{Rem}
If $d\in C^n$ and $d^{(m)}(\om)\neq 0$ for some $2<m\leq n$, then the assumption $(\ref{eq:comparable})$ is satisfied.
Let $d(\om+\la)=e^{-1/|\la|}$, then $d$ does not satisfy $(\ref{eq:comparable})$.
However, this assumption seems to be natural.
\end{Rem}

We now state our main results.

\begin{Thm}\label{thm:stability}
Let Assumptions $\ref{ass:sol}$, $\ref{ass:bddst}$, $\ref{ass:spec}$ and $(\ref{eq:comparable})$ be satisfied.
Assume that $d$ is strictly convex in an open neighborhood of $\om$.
Then $T(\om t)\pom$ is stable.
\end{Thm}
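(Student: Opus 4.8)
\emph{Proof plan.}
Since $d$ is convex near $\om$, $d''(\om)\ge 0$; if $d''(\om)>0$ the statement is Theorem \ref{thm:gss}, so assume $d''(\om)=0$. Put
\begin{eqnarray*}
g(\la):=d(\om+\la)-d(\om)-\la d'(\om)=\int_0^{\la}\big(d'(\om+t)-d'(\om)\big)\,dt.
\end{eqnarray*}
Strict convexity of $d$ means $d'$ is strictly increasing, so $g(\la)>0$ for all small $\la\neq 0$ while $g(0)=0$, and $\om\mapsto Q(\pom)=-d'(\om)$ is strictly monotone. Following \cite{GSS}, it suffices to prove a static coercivity estimate and then run the standard continuation argument: I claim there are a neighbourhood ${\cal U}$ of the orbit $\{T(s)\pom\}$ in $X$, a constant $C>0$ and an increasing function $\rho:(0,\ep_0)\to(0,\infty)$ with
\begin{eqnarray*}
\Som(u)-\Som(\pom)\ \ge\ \rho\big(\mathrm{dist}_X(u,\{T(s)\pom\})\big)-C\,|Q(u)-Q(\pom)|,\qquad u\in{\cal U}.
\end{eqnarray*}
Granting this, suppose the bound state is unstable: there are $u_{0,n}\to\pom$ and solutions $u_n$ whose distance to the orbit reaches a fixed $\ep<\ep_0$ (small enough that $\{\mathrm{dist}_X(\cdot,\{T(s)\pom\})<\ep\}\subset{\cal U}$). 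By Assumption \ref{ass:sol} each $u_n$ persists while it stays in ${\cal U}$ and $t\mapsto\mathrm{dist}_X(u_n(t),\{T(s)\pom\})$ is continuous, so there is a first time $t_n$ at which it equals $\ep$; at $t_n$ the estimate together with conservation of $E$ and $Q$ gives $\rho(\ep)\le\big(\Som(u_{0,n})-\Som(\pom)\big)+C|Q(u_{0,n})-Q(\pom)|\to 0$, a contradiction.

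To prove the static estimate, take $u\in{\cal U}$ and write $u=T(s)(\Psi(\la)+w)$, where $\Psi$ is the curve of Section \ref{sec:p}: it lies on ${\cal M}$, $\Psi(0)=\pom$, $\Psi'(0)=\ppom$, and it exists precisely because $d''(\om)=0$ makes $\ppom$ tangent to ${\cal M}$ (indeed $\lr{Q'(\pom)}{\ppom}=\lr{B\pom}{\ppom}=-d''(\om)=0$). Here $w$ is required to satisfy $\lr{w}{\Tpom}=0=\lr{w}{\psi_-}$, where $\psi_-$ is the negative eigenfunction of $\Hom$. The implicit function theorem determines $(s,\la)$ uniquely near $0$: the relevant Jacobian is triangular, because $\lr{\Tpom}{\psi_-}=0$ (distinct eigenvalues of $\Hom$) and $\lr{\Tpom}{\ppom}=0$ (Assumption \ref{ass:bddst}(iii)), and nonsingular, because $\lr{\ppom}{\psi_-}\neq 0$ — which follows from $\Hom\ppom=B\pom$, $\lr{B\pom}{\ppom}=-d''(\om)=0$ and the spectral assumption, since $\ppom\neq 0$ and $\lr{\ppom}{\Tpom}=0$ rule out $\ppom\in\ker\Hom=\mathrm{span}\{\Tpom\}$. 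Using only $E\in C^2$ (mean-value form of the remainder),
\begin{eqnarray*}
\Som(u)-\Som(\pom)=\big[\Som(\Psi(\la))-\Som(\pom)\big]+\lr{\Som'(\Psi(\la))}{w}+\tfrac12\lr{\Hom w}{w}+o(\|w\|_X^2).
\end{eqnarray*}

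The first bracket equals $E(\Psi(\la))-E(\pom)$ because $\Psi(\la)\in{\cal M}$, and the computation of Section \ref{sec:p} gives $\Som(\Psi(\la))-\Som(\pom)=g(\la)+o(g(\la))$, so it is $\ge\tfrac12 g(\la)$ for small $\la$. The quadratic term is coercive: $w\perp\mathrm{span}\{\Tpom,\psi_-\}$ lies in the subspace on which $\Hom$ is positive, whence $\lr{\Hom w}{w}\gtrsim\|w\|_X^2$ as in \cite{GSS}. The cross term is the subtle one. Writing $\Psi(\la)=\lpom{\la}+c(\la)\psi_-$ with $c(0)=c'(0)=0$, and using $\Som'(\lpom{\la})=\la Q'(\lpom{\la})$, the orthogonality $w\perp\psi_-$, and the constraint $Q(\Psi(\la)+w)=Q(u)$ (which forces $\lr{Q'(\lpom{\la})}{w}$ to equal $Q(u)-Q(\pom)$ up to $O(\|w\|_X^2+|c(\la)|\|w\|_X)$), one gets
\begin{eqnarray*}
\big|\lr{\Som'(\Psi(\la))}{w}\big|\ \lesssim\ |\la|\,|Q(u)-Q(\pom)|+|\la|\,\|w\|_X^2+\big(|\la\,c(\la)|+c(\la)^2\big)\|w\|_X.
\end{eqnarray*}
Now $c(\la)\sim d'(\om+\la)-d'(\om)$, so $d''(\om)=0$ yields $c(\la)^2=o(g(\la))$, while assumption (\ref{eq:comparable}) yields $|\la\,c(\la)|\sim g(\la)$; Young's inequality then absorbs every term except the first into $\tfrac14 g(\la)+\tfrac14\lr{\Hom w}{w}$, giving $\Som(u)-\Som(\pom)\gtrsim g(\la)+\|w\|_X^2-C|Q(u)-Q(\pom)|$. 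Finally, with $v:=\Psi(\la)-\pom+w$ we have $\mathrm{dist}_X(u,\{T(s)\pom\})\le\|v\|_X$ ($T(s)$ unitary), $\|v\|_X\lesssim|\la|+\|w\|_X$, and $\|v\|_X\gtrsim|\la|$ since the $\psi_-$-component of $v$ is $\la\lr{\ppom}{\psi_-}+o(\la)$; hence $g(\la)+\|w\|_X^2$ dominates an increasing positive function of $\|v\|_X$, and therefore of $\mathrm{dist}_X(u,\{T(s)\pom\})$. This is the claimed static estimate.

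The main obstacle is precisely this cross term. When $d''(\om)=0$ the Hessian $\lr{\Hom\,\cdot}{\cdot}$ is only positive \emph{semi}definite on the tangent space of ${\cal M}$, its null direction being $\ppom$; a definite sign must therefore be drawn from the super-quadratic behaviour of $\Som$ along $\Psi$ (hence the role of the convexity of $d$ and of the identity $\Som(\Psi(\la))-\Som(\pom)=g(\la)+o(g(\la))$), and the coupling of this weak direction with the coercive directions has to be kept strictly below the scale $g(\la)$. This is what forces the comparability hypothesis (\ref{eq:comparable}) and the choice of $\psi_-$ rather than $\ppom$ as the second modulation direction, the latter because the analogous Jacobian degenerates.
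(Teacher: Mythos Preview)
Your argument is essentially correct, but it follows a genuinely different route from the paper's.

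\textbf{Comparison.} The paper works on the level set $N_\ep^0=\{Q(u)=Q(\pom)\}$ and uses the three-parameter decomposition of Lemma~\ref{lem:modul}, with $w$ orthogonal to $T'(0)\phi_{\om+\La}$, $\pd_\om\phi_{\om+\La}$ and $B\phi_{\om+\La}$, plus an extra scalar $\al(u)$ along $B\phi_{\om+\La}$. It then proceeds by a bootstrap: Lemma~\ref{lem:est_w} first shows (by contradiction) that $\|w\|_X^2=O(\eta_1(\La))$ whenever $\Som(u)-\Som(\pom)$ is not too large, and only \emph{afterwards} does a Taylor expansion yield $\Som(u)-\Som(\pom)\ge c\La\eta_2(\La)+\frac{k_0}{4}\|w\|_X^2$. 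The final contradiction normalises back to $N_\ep^0$ via $v_n=\sqrt{Q(\pom)/Q(u_n)}\,u_n(t_n)$. Your approach instead uses a two-parameter decomposition with $w\perp T'(0)\pom,\psi_-$, which places $w$ directly in the positive spectral subspace of $\Hom$ (so coercivity is immediate rather than via Lemma~\ref{lem:pos}), and you obtain the static estimate in one pass, tracking $|Q(u)-Q(\pom)|$ explicitly rather than normalising. Your treatment of the cross term, via the $Q$-constraint and Young's inequality with the key observation $\eta_2(\la)^2=o(\la\eta_2(\la))=o(g(\la))$, is cleaner than the paper's two-step bootstrap. What the paper's choice buys is that the same decomposition (and the functional $A$ built from it) serves for the instability proof as well, whereas your $\psi_-$-based splitting is tailored to stability.

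\textbf{Two minor points.} First, you refer to ``the curve $\Psi$ of Section~\ref{sec:p}'' and then write $\Psi(\la)=\phi_{\om+\la}+c(\la)\psi_-$; the paper's curve is $\phi_{\om+\la}+\sigma(\la)B\phi_{\om+\la}$, which is not the same. Your cross-term computation really uses the $\psi_-$-version (so that $c(\la)\Hom\psi_-=-\mu c(\la)\psi_-\perp w$). In fact the argument also goes through with the paper's $\Psi$, since the resulting extra term $\sigma(\la)\lr{S_{\om+\la}''(\phi_{\om+\la})B\phi_{\om+\la}}{w}=O(\eta_2(\la))\|w\|_X$ can be absorbed by the same Young inequality; but you should be explicit about which curve you are using. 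Second, your closing remark that the Jacobian ``degenerates'' if one modulates with $\ppom$ instead of $\psi_-$ is not accurate: $\pd_\la\lr{T(s)u-\Psi(\la)}{\ppom}\big|_{0}=-\|\ppom\|_H^2\neq 0$. The real reason to prefer $\psi_-$ is the coercivity of $\Hom$ on $\{T'(0)\pom,\psi_-\}^\perp$, which with $\ppom$ in place of $\psi_-$ would fail (a vector like $\psi_- - \frac{\lr{\psi_-}{\ppom}}{\|\ppom\|^2}\ppom$ lies in that subspace but has negative quadratic form).
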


\begin{Rem}
For Theorem $\ref{thm:stability}$, we can remove the condition $J|_X$, $J|_Y$ are bijective and bounded.
Further, we only need $\om\mapsto \pom$ to be $C^2$.
We only use these conditions for Theorem $\ref{thm:instability}$ below, which is concerned with the instability.
Therefore, we can treat the case $J=\pd_x$, which appears for KdV type equations and BBM type equations.
\end{Rem}

\begin{Thm}\label{thm:instability}
Let Assumptions $\ref{ass:sol}$, $\ref{ass:bddst}$, $\ref{ass:spec}$ and $(\ref{eq:comparable})$ be satisfied.
Assume there exists $\ep>0$ such that $d(\om+\la)-d(\om)-\la d'(\om)<0$ in $0<\la<\ep$ or $-\ep<\la<0$.
Further, assume $\lr{\phi_{\om+\la}}{J^{-1}\pd_{\om}^2\phi_{\om+\la}}=0$.
Then $T(\om t)\pom$ is unstable.
\end{Thm}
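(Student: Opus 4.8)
The plan is to follow the now-standard variational/Lyapunov strategy that appears in \cite{GSS}, \cite{M10} and \cite{Ohta11}, but adapted so that the ``push-out'' functional $P$ is built from the degenerate direction rather than from a second derivative of $d$. First I would work on the constraint manifold ${\cal M}:=\{Q(u)=Q(\pom)\}$ and use the curve $\Psi(\la)$ constructed in Section~\ref{sec:p}, which runs through the family $\phi_{\om+\la}$ tangentially to the degenerate direction $\ppom$ of $\Hom$ on ${\cal M}$. On this curve one has the expansions $S_{\om}(\Psi(\la)) = d(\om+\la)-d(\om)-\la d'(\om) + o(\cdot)$ and a companion expansion for $P(\Psi(\la))$; the sign hypothesis $d(\om+\la)-d(\om)-\la d'(\om)<0$ on a one-sided interval, together with the comparability assumption (\ref{eq:comparable}), is exactly what guarantees that $\Psi(\la)$ for small $|\la|$ on the correct side lies strictly below the bound-state level of $S_{\om}$ while still staying inside an arbitrarily small $X$-neighbourhood of $\pom$ modulo $T(s)$. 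This produces the initial data that will escape.

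Next I would carry out the coercivity step. For $u$ near $\pom$ on ${\cal M}$, decompose $u = T(s)(\phi_{\om} + v)$ with $v$ split according to the spectral decomposition of $\Hom$ from Assumption~\ref{ass:spec}: a piece along the negative eigenvector, a piece along $\ppom$ (the degenerate-on-${\cal M}$ direction), and a positive-definite remainder. The key analytic lemma — proved in Section~\ref{sec:pt} — is that if $S_{\om}(u)$ is below the bound-state level and close to it, then the ``nondegenerate'' components of $v$ are controlled quadratically by $S_{\om}(u)-d(\om)$ plus lower-order terms, so that on this sublevel set the dynamics is effectively one-dimensional, governed by the $\ppom$-coordinate; and along that coordinate $P$ is (up to controllable errors) monotone and comparable to the same modulus $|d(\om+\la)-d(\om)-\la d'(\om)|$. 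The extra hypothesis $\lr{\phi_{\om+\la}}{J^{-1}\pd_{\om}^2\phi_{\om+\la}}=0$ is what makes $\pd_t P$ along solutions have a definite sign on this set (it kills the cross term that would otherwise spoil monotonicity, playing the role that $T'(0)=J$ played in \cite{Ohta11}).

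The final step is the standard contradiction argument: assume the bound state is stable, take as initial data $u_0=\Psi(\la)$ for a small $\la$ on the right side (so $S_{\om}(u_0)<d(\om)$ and $u_0$ is $\de$-close to $\pom$ in $X$); by Assumption~\ref{ass:sol} and $S_{\om}$-conservation the solution stays on the sublevel set, hence in the coercivity regime, for all time; but then $P(u(t))$ is bounded (by stability and the $X$-smallness of the orbit) yet $\pd_t P(u(t))$ is bounded below by a positive constant (by the sign computation and the fact that the nondegenerate directions are small), forcing $P$ to grow without bound — a contradiction. Both one-sided cases are handled symmetrically by choosing the sign of $\la$.

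I expect the main obstacle to be the coercivity lemma of Section~\ref{sec:pt}: controlling the nondegenerate components of the perturbation when $d''(\om)=0$, since the usual convexity-based lower bound on $S_{\om}$ degenerates and one must instead extract the needed estimate from the value of $S_{\om}$ itself together with the geometry of ${\cal M}$ near the bound-state curve. Ensuring that the error terms in both the $S_{\om}$ and the $P$ expansions are genuinely lower order than $|d(\om+\la)-d(\om)-\la d'(\om)|$ — which is where assumption (\ref{eq:comparable}) is essential, and where the $C^3$ regularity of $\om\mapsto\pom$ and the boundedness/bijectivity of $J|_X$, $J|_Y$ enter — is the delicate bookkeeping that makes the argument work.
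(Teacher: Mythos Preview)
Your overall strategy --- variational curve $\Psi(\la)$ for initial data, coercivity to control nondegenerate directions, Lyapunov-type contradiction --- matches the paper's. But you have inverted the roles of the two functionals in the final step, and this is not just a notational slip: it is the place where your argument would fail.

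In the paper one introduces \emph{two} functionals on $N_{\ep}$: the ``angle'' functional
\[
A(u)=\lr{M(u)}{J^{-1}\lppom{\La(u)}},
\]
and then $P(u):=\lr{S'_{\om+\La(u)}(u)}{JA'(u)}$, which satisfies $\frac{d}{dt}A(u(t))=-P(u(t))$ along solutions. The key estimate (Lemma~\ref{lem:P}) is that for $u\in N_{\ep}^0$ with $S_\om(u)<S_\om(\pom)$ one has
\[
P(u)=\eta_2(\La(u))+o(\eta_2(\La(u))),
\]
and it is \emph{here} that the hypothesis $\lr{\phi_{\om+\la}}{J^{-1}\pd_{\om}^2\phi_{\om+\la}}=0$ is used: it kills a term in $JA'(\Psi(\La(u)))$ so that the cross term $\lr{S''w}{JA'}$ drops out. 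Combining with the conserved identity $S_\om(u(t))-S_\om(\pom)\approx\eta_1(\La(u(t)))$ and $(\ref{eq:comparable})$ yields $|P(u(t))|\geq \delta>0$ with a fixed sign. Hence $A(u(t))$ drifts linearly in $t$; but $A$ is a continuous, $T$-invariant functional on $N_\ep$ and therefore bounded on the orbit --- contradiction.

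Your version asserts instead that $P$ is bounded while $\pd_t P$ has a definite sign. Nothing in the setup gives control of $\pd_t P$: that would require differentiating $P$ once more (hence $A''$, i.e.\ $\theta''$ and $\La''$) and there is no sign information available at that level. The orthogonality hypothesis does not produce a sign for $\pd_t P$; its actual role is in the expansion of $P$ itself. So the contradiction mechanism you describe cannot be closed.

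One smaller point: the decomposition the paper uses is not $u=T(s)(\pom+v)$ with a spectral splitting at the fixed point $\pom$, but a \emph{moving} decomposition $T(\theta(u))u=\Psi(\La(u))+w(u)+\al(u)B\phi_{\om+\La(u)}$ around the curve $\Psi$, with $w$ orthogonal to $T'(0)\phi_{\om+\La}$, $\pd_\om\phi_{\om+\La}$ and $B\phi_{\om+\La}$. This is what makes the coercivity estimate $\|w(u)\|_X^2=O(\eta_1(\La(u)))$ (Lemma~\ref{lem:est_w}) go through cleanly in the degenerate case; a static decomposition around $\pom$ would leave a linear term in the $\ppom$ direction that is not $o(\eta_1)$.
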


\begin{Rem}
If $d''(\om)>0$ $($resp.\ $<0$$)$, then the assumption of Theorem $\ref{thm:stability}$ $($resp.\ Theorem $\ref{thm:instability}$$)$ is satisfied.
Therefore, Theorems $\ref{thm:stability}$ and $\ref{thm:instability}$ are extension of Theorem $\ref{thm:gss}$.
\end{Rem}

\begin{Rem}
The assumption $\lr{\phi_{\om+\la}}{J^{-1}\pd_{\om}^2\phi_{\om+\la}}=0$ is technical.
However, for the NLS and NLKG cases, this is satisfied when as far as the real valued standing waves are concerned.
\end{Rem}

\begin{Cor}\label{cor:stability}
Let Assumptions $\ref{ass:sol}$, $\ref{ass:bddst}$ and $\ref{ass:spec}$ be satisfied.
Let $n\geq 4$ be an even integer.
Assume that $d\in C^n$ in an open neighborhood of $\om$ and assume
\begin{eqnarray*}
d''(\om)=\cdots=d^{(n-1)}(\om)=0,\ d^{(n)}(\om)>0.
\end{eqnarray*}
Then $T(\om t)\phi_{\om}$ is stable.
\end{Cor}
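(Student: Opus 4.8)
The plan is to derive Corollary \ref{cor:stability} directly from Theorem \ref{thm:stability}. The two things we must supply are: (a) verification of the technical comparability assumption (\ref{eq:comparable}), and (b) the fact that under the vanishing-derivatives hypothesis, $d$ is strictly convex in a neighborhood of $\om$. Both are elementary consequences of Taylor's theorem with the leading surviving derivative being $d^{(n)}(\om)>0$.

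First I would write, for $\la$ small, the Taylor expansions with integral remainder. Since $d\in C^n$ near $\om$ and $d''(\om)=\cdots=d^{(n-1)}(\om)=0$, we get
\begin{eqnarray*}
d(\om+\la)-d(\om)-\la d'(\om)&=&\frac{d^{(n)}(\om)}{n!}\la^n+o(\la^n),\\
\la\bigl(d'(\om+\la)-d'(\om)\bigr)&=&\frac{d^{(n)}(\om)}{(n-1)!}\la^n+o(\la^n).
\end{eqnarray*}
Both sides are therefore asymptotically positive multiples of $\la^n$ (here $n$ is even, so $\la^n>0$ for $\la\neq 0$), and their ratio tends to $1/n\in(0,\infty)$. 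Hence (\ref{eq:comparable}) holds, via the definition (\ref{eq:defsim}) of $\sim$. This is the step one must be slightly careful about: one needs $d^{(n)}(\om)\neq0$ precisely so that neither side is $o(\la^n)$, and one uses $n$ even so that $\la^n$ does not change sign.

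Next, for strict convexity it suffices to show $d''(\om+\la)>0$ for all sufficiently small $\la\neq0$, together with $d''(\om)\geq0$ — actually, since we want an open neighborhood of $\om$ on which $d$ is strictly convex, showing $d''>0$ on a punctured neighborhood is enough because $d''$ is continuous and a $C^2$ function whose second derivative is nonnegative on an interval and positive off a single point is strictly convex there. Applying Taylor's theorem to $d''$ (which is $C^{n-2}$) at $\om$, and using $d''(\om)=d'''(\om)=\cdots=d^{(n-1)}(\om)=0$, gives
\begin{eqnarray*}
d''(\om+\la)=\frac{d^{(n)}(\om)}{(n-2)!}\,\la^{n-2}+o(\la^{n-2}).
\end{eqnarray*}
Since $n\geq4$ is even, $n-2\geq2$ is even, so $\la^{n-2}>0$ for $\la\neq0$, and with $d^{(n)}(\om)>0$ we conclude $d''(\om+\la)>0$ for $0<|\la|<\ep$ with $\ep$ small. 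Thus $d$ is strictly convex on $(\om-\ep,\om+\ep)$.

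With (\ref{eq:comparable}) verified and strict convexity of $d$ in a neighborhood of $\om$ established, the hypotheses of Theorem \ref{thm:stability} are met, and stability of $T(\om t)\phi_{\om}$ follows immediately. I do not anticipate a genuine obstacle here; the only point requiring attention is the bookkeeping of which derivatives vanish and the parity of the exponents, ensuring that the relevant leading terms are strictly positive for $\la\neq0$ rather than merely nonnegative or sign-indefinite.
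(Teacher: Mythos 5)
Your proposal is correct and follows exactly the route the paper intends: the paper gives no separate proof of Corollary \ref{cor:stability}, relying on the remark that $(\ref{eq:comparable})$ holds whenever some derivative $d^{(m)}(\om)$ with $2<m\leq n$ is nonzero, together with Theorem \ref{thm:stability}; your Taylor expansions of $\eta_1(\la)$, $\la\eta_2(\la)$ and $d''(\om+\la)$ supply precisely these two ingredients, with the parity bookkeeping done correctly.
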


\begin{Cor}\label{cor:instability}
Let Assumptions $\ref{ass:sol}$, $\ref{ass:bddst}$ and $\ref{ass:spec}$ be satisfied.
Further, assume there exists $\ep>0$ such that $\lr{\phi_{\om+\la}}{J^{-1}\pd_{\om}^2\phi_{\om+\la}}=0$ for $|\la|<\ep$.
Let $n\geq 3$ be an integer.
Assume that $d \in C^n$ in a open neighborhood of $\om$ and
\begin{eqnarray*}
d''(\om)=\cdots=d^{(n-1)}(\om)=0,\\ d^{(n)}(\om)<0\ (n:even),\ d^{(n)}(\om)\neq 0\ (n:odd).
\end{eqnarray*}
Then $T(\om t)\phi_{\om}$ is unstable.
\end{Cor}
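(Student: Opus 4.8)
The plan is to obtain Corollary \ref{cor:instability} as an immediate consequence of Theorem \ref{thm:instability}, by verifying that all of the latter's hypotheses hold under the assumptions of the Corollary. Assumptions \ref{ass:sol}, \ref{ass:bddst}, \ref{ass:spec} and the relation $\lr{\phi_{\om+\la}}{J^{-1}\pd_{\om}^2\phi_{\om+\la}}=0$ are assumed directly, so the only two points that still require checking are the comparability condition (\ref{eq:comparable}) and the one-sided sign condition ``$d(\om+\la)-d(\om)-\la d'(\om)<0$ in $0<\la<\ep$ or $-\ep<\la<0$'' appearing in Theorem \ref{thm:instability}.

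First I would dispose of (\ref{eq:comparable}). Since $d\in C^n$ in a neighborhood of $\om$ with $n\geq 3$ and $d^{(n)}(\om)\neq 0$ (this holds in both the even and odd cases of the hypothesis), the Remark following (\ref{eq:comparable}), applied with $m=n$ so that $2<m\leq n$, shows that (\ref{eq:comparable}) is satisfied.

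Next I would establish the sign condition by a Taylor expansion of $d$ at $\om$. Using $d''(\om)=\cdots=d^{(n-1)}(\om)=0$ and $d\in C^n$, Taylor's theorem gives
\begin{eqnarray*}
d(\om+\la)-d(\om)-\la d'(\om)=\frac{d^{(n)}(\om)}{n!}\,\la^n+o(|\la|^n),\qquad \la\rightarrow 0.
\end{eqnarray*}
If $n$ is even, then $\la^n>0$ for $\la\neq 0$ and $d^{(n)}(\om)<0$, so the left-hand side is negative for all sufficiently small $\la\neq 0$, in particular on some interval $(0,\ep)$. If $n$ is odd, then $\la^n$ has the sign of $\la$, so the left-hand side is negative on $(-\ep,0)$ when $d^{(n)}(\om)>0$ and on $(0,\ep)$ when $d^{(n)}(\om)<0$, for $\ep>0$ small. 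In every case the one-sided sign condition of Theorem \ref{thm:instability} holds, and that theorem then gives the instability of $T(\om t)\pom$.

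The hard part will be: essentially nothing substantive — this corollary is a bookkeeping reduction to Theorem \ref{thm:instability}. The only points deserving care are (a) checking that the Remark after (\ref{eq:comparable}) genuinely covers the case $m=n$ with $d^{(n)}(\om)\neq 0$, and (b) correctly matching the one-sidedness of the neighborhood in the odd-$n$ case to the sign of $d^{(n)}(\om)$. I would also verify once more that the regularity $d\in C^n$ (with $n\geq 3$) together with Assumption \ref{ass:bddst}, which already gives a $C^3$ branch $\om\mapsto\pom$, supplies everything that Theorem \ref{thm:instability} requires.
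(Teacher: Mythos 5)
Your proposal is correct and is exactly the intended derivation: one checks condition (\ref{eq:comparable}) via the Remark following it (with $m=n$, which works since $\eta_1(\la)$ and $\la\eta_2(\la)$ are both $\sim d^{(n)}(\om)\la^n$ under the vanishing hypotheses), and obtains the one-sided sign condition of Theorem \ref{thm:instability} from the Taylor expansion $d(\om+\la)-d(\om)-\la d'(\om)=\frac{d^{(n)}(\om)}{n!}\la^n+o(|\la|^n)$. Your case analysis on the parity of $n$ and the sign of $d^{(n)}(\om)$ is also correct.
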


\section{Preliminaries}\label{sec:p}
In this section, we assume Assumptions \ref{ass:sol}, \ref{ass:bddst}, \ref{ass:spec}, (\ref{eq:comparable}) and $d''(\om)=0$.
Note that by differentiating (\ref{eq:defd}) with respect to $\om$, we have
\begin{eqnarray}
d'(\om)&=&S_{\om}'(\pom)-Q(\pom)=-Q(\pom)\label{eq:d'}\\
d''(\om)&=&-\lr{B\pom}{\ppom}.
\end{eqnarray}
Further, differentiating the equation $S_{\om}'(\pom)=0$ with respect to $\om$, we have
\begin{eqnarray}\label{eq:SB}
S_{\om}''(\pom)\ppom=B\pom.
\end{eqnarray}
We will use these relations in the following.
Set
\begin{eqnarray}
\eta_1(\la)&:=&d(\om+\la)-d(\om)-\la d'(\om)\label{eq:eta1}\\
\eta_2(\la)&:=&d'(\om+\la)-d'(\om),\label{eq:eta2}
\end{eqnarray}
Recall that in (\ref{eq:comparable}), we have assumed $\eta_1(\la)\sim \la\eta_2(\la)$.
Further, since we are assuming $d''(\om)=0$, we have $\eta_2(\la)=o(\la)$ as $\la\rightarrow 0$.

\begin{Lem}\label{lem:defsigma}
Let $\ep>0$ sufficiently small.
Then, there exists $\sigma(\la):(-\ep,\ep)\rightarrow \R$ such that $\sigma(\la)\sim\eta_2(\la)$ and
\begin{eqnarray}\label{eq:defsigma}
Q(\lpom{\la}+\sigma(\la)B\lpom{\la})=Q(\pom),
\end{eqnarray}
for $|\la|<\ep$, where we have used ``$\sim$'' in the sense of $(\ref{eq:defsim})$.
\end{Lem}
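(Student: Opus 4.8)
The plan is to solve \eqref{eq:defsigma} for $\sigma(\la)$ by the implicit function theorem and then read off the asymptotics $\sigma(\la)\sim\eta_2(\la)$ by a Taylor expansion. Define
\begin{eqnarray*}
F(\sigma,\la):=Q(\lpom{\la}+\sigma B\lpom{\la})-Q(\pom).
\end{eqnarray*}
Since $Q$ is the quadratic form $Q(u)=\half\lr{Bu}{u}$ with $B$ bounded and self-adjoint, $F$ is a polynomial of degree $2$ in $\sigma$ with coefficients that are $C^2$ in $\la$ (using $\om\mapsto\lpom{\la}\in X$ of class $C^3$ and $B\lpom{\la}\in X$, which holds by the Remark after Assumption \ref{ass:bddst}). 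At $\la=0$ we have $F(0,0)=0$, and $\pd_\sigma F(0,0)=\lr{B\pom}{B\pom}=\lr{J^{-1}T'(0)\pom}{B\pom}$; using \eqref{eq:SB} this equals $\lr{\Hom\ppom}{B\pom}\cdot(\text{const})$, but more directly $\pd_\sigma F(0,0)=\|B\pom\|_H^2$ with $B\pom=J^{-1}T'(0)\pom\neq 0$ by Assumption \ref{ass:bddst}(iii), so $\pd_\sigma F(0,0)>0$. The implicit function theorem then yields a $C^2$ function $\sigma:(-\ep,\ep)\to\R$ with $\sigma(0)=0$ and $F(\sigma(\la),\la)=0$ for $|\la|$ small, which is \eqref{eq:defsigma}.

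The remaining task is the quantitative statement $\sigma(\la)\sim\eta_2(\la)$. First expand $F(0,\la)$, i.e.\ $Q(\lpom{\la})-Q(\pom)$, in $\la$. Write $g(\la):=Q(\lpom{\la})$; then $g'(\la)=\lr{B\lpom{\la}}{\lppom{\la}}=-d''(\om+\la)$ by the second displayed identity after \eqref{eq:defd}, hence $g'(0)=-d''(\om)=0$, and
\begin{eqnarray*}
F(0,\la)=g(\la)-g(0)=\int_0^\la g'(\tau)\,d\tau=-\int_0^\la d''(\om+\tau)\,d\tau=-\eta_2(\la),
\end{eqnarray*}
using \eqref{eq:eta2} and $d'(\om+\la)-d'(\om)=\int_0^\la d''(\om+\tau)\,d\tau$. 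On the other hand, expanding in $\sigma$ about $\sigma=0$,
\begin{eqnarray*}
0=F(\sigma(\la),\la)=F(0,\la)+\pd_\sigma F(0,\la)\,\sigma(\la)+\half\pd_\sigma^2 F(0,\la)\,\sigma(\la)^2,
\end{eqnarray*}
and since $\pd_\sigma^2 F\equiv \|B\cdot\|_H^2$-type constant in $\sigma$, the last term is $O(\sigma(\la)^2)$ while $\pd_\sigma F(0,\la)\to\|B\pom\|_H^2>0$. Solving, $\sigma(\la)=-F(0,\la)/\pd_\sigma F(0,\la)+O(\sigma(\la)^2)=\eta_2(\la)/\|B\pom\|_H^2+o(\eta_2(\la))$, where I use that $\sigma(\la)\to 0$ together with the just-derived leading behavior to absorb the quadratic term; this gives $\sigma(\la)\sim\eta_2(\la)$ in the sense of \eqref{eq:defsim}.

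The main obstacle, and the only point requiring care, is controlling the quadratic remainder in $\sigma$: one must verify that $\sigma(\la)^2=o(\eta_2(\la))$ rather than merely $O(\eta_2(\la)^2)$, since a priori $\eta_2(\la)$ could be extremely small (faster than any power) and then $\eta_2(\la)^2$ is not automatically negligible compared to $\eta_2(\la)$ — but $\eta_2(\la)\to 0$ makes $\eta_2(\la)^2=o(\eta_2(\la))$, so this is in fact immediate once the leading term $\sigma(\la)=\eta_2(\la)/\|B\pom\|_H^2+\cdots$ is established by a bootstrap. A secondary technical point is checking that the $C^2$ regularity of $\la\mapsto\lpom{\la}$ and of $\la\mapsto B\lpom{\la}$ in $X$ is enough to differentiate $g(\la)=Q(\lpom{\la})$ and to get the identity $g'(\la)=-d''(\om+\la)$; this follows by differentiating \eqref{eq:defd} twice as recorded at the start of this section, so no new input is needed. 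Note that assumption \eqref{eq:comparable} is not used here; it enters only later when $\eta_1$ and $\eta_2$ must be compared.
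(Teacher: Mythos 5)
Your proof is correct and follows essentially the same route as the paper: apply the implicit function theorem to $F(\sigma,\la)=Q(\lpom{\la}+\sigma B\lpom{\la})$ using $\pd_\sigma F(0,0)=\|B\pom\|_H^2\neq 0$, then use the identity $Q(\pom)-Q(\lpom{\la})=d'(\om+\la)-d'(\om)=\eta_2(\la)$ (from \eqref{eq:d'}) and the exact quadratic expansion in $\sigma$ to read off $\sigma(\la)\sim\eta_2(\la)$. The "main obstacle" you flag is in fact absent in the paper's (and effectively your own) formulation: factoring the quadratic as $\sigma(\la)\bigl(\|B\lpom{\la}\|_H^2+\sigma(\la)Q(B\lpom{\la})\bigr)=\eta_2(\la)$ makes the coefficient of $\sigma(\la)$ a small perturbation of the positive constant $\|B\pom\|_H^2$, so no bootstrap is needed.
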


\begin{proof}
Set
\begin{eqnarray*}
F(\sigma,\la)=Q(\lpom{\la}+\sigma B\lpom{\la}).
\end{eqnarray*}
Then, $F(0,0)=Q(\pom)$ and $\left.\pd_{\sigma}F\right|_{\sigma=\la=0}(\sigma,\la)=||B\pom||_H^2\neq 0$.
Therefore, by the implicit function theorem, there exist $\ep>0$, $\de>0$ and $\sigma:(-\ep,\ep)\rightarrow (-\de,\de)$ such that $\sigma(\la)$ satisfies (\ref{eq:defsigma}) for $|\la|<\ep$.
Further, by (\ref{eq:defsigma}), we have
\begin{eqnarray*}
\sigma(\la)\left(||B\phi_{\om+\la}||_H^2+\sigma(\la)Q(B\phi_{\om+\la})\right)&=&-Q(\lpom{\la})+Q(\pom)\\
&=&d'(\om+\la)-d'(\om)\\
&=&\eta_2(\la),
\end{eqnarray*}
where we have used (\ref{eq:d'}) and (\ref{eq:eta2}).
Since
\begin{eqnarray*}
\sigma(\la)\left(||B\phi_{\om+\la}||_H^2+\sigma(\la)Q(B\phi_{\om+\la})\right)=\sigma(\la)(||B\pom||_H^2+o(1))\  \mathrm{as}\ \la\rightarrow 0,
\end{eqnarray*}
we have the conclusion.
\end{proof}

We now define a curve on the neighborhood of $\pom$.
Let $\ep>0$ as in Lemma \ref{lem:defsigma}.
For $|\la|<\ep$, set 
\begin{eqnarray*}
\Psi(\la):=\lpom{\la}+\sigma(\la)B\lpom{\la}.
\end{eqnarray*}
We next calculate the value of $S_{\om}(\Psi(\la))$.

\begin{Lem}\label{lem:S}
Let $\ep>0$ as Lemma $\ref{lem:defsigma}$.
Then for $|\la|<\ep$, we have
\begin{eqnarray*}
\Som(\Psi(\la))-\Som(\pom)=\eta_1(\la)+o(\eta_1(\la)),\ \la\rightarrow 0.
\end{eqnarray*}
\end{Lem}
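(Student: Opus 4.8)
The plan is to Taylor-expand $\Som(\Psi(\la))$ around $\lpom{\la}$, using that $\lpom{\la}$ is a critical point of $S_{\om+\la}$ (not of $\Som$), and then reorganize the error terms using the relation $\Som = S_{\om+\la} + \la Q$ together with the size estimate $\sigma(\la)\sim\eta_2(\la) = o(\la)$ from Lemma \ref{lem:defsigma}. First I would write $\Psi(\la) = \lpom{\la} + \sigma(\la) B\lpom{\la}$ and expand, since $E\in C^2$ and $Q$ is quadratic,
\begin{eqnarray*}
\Som(\Psi(\la)) = \Som(\lpom{\la}) + \sigma(\la)\lr{\Som'(\lpom{\la})}{B\lpom{\la}} + \tfrac12\sigma(\la)^2\lr{\Hom\,B\lpom{\la}}{B\lpom{\la}} + o(\sigma(\la)^2),
\end{eqnarray*}
where I should check that the quadratic remainder is genuinely $o(\sigma(\la)^2)$ uniformly (this uses continuity of $E''$ near $\pom$ and $B\lpom{\la}\to B\pom$ in $X$). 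Next I rewrite the two lower-order pieces. For $\Som(\lpom{\la})$, use $\Som = S_{\om+\la} + \la Q$ to get $\Som(\lpom{\la}) = d(\om+\la) + \la Q(\lpom{\la}) = d(\om+\la) - \la d'(\om+\la)$ by \eqref{eq:d'}. Similarly $\Som(\pom) = d(\om)$, and since $d''(\om)=0$ we have $d'(\om+\la) = d'(\om) + \eta_2(\la)$, so
\begin{eqnarray*}
\Som(\lpom{\la}) - \Som(\pom) = d(\om+\la) - d(\om) - \la d'(\om) - \la\eta_2(\la) = \eta_1(\la) - \la\eta_2(\la).
\end{eqnarray*}

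For the linear term, $\Som'(\lpom{\la}) = S_{\om+\la}'(\lpom{\la}) + \la Q'(\lpom{\la}) = \la B\lpom{\la}$ since $\lpom{\la}$ solves \eqref{eq:condbddst} at parameter $\om+\la$. Hence $\sigma(\la)\lr{\Som'(\lpom{\la})}{B\lpom{\la}} = \la\sigma(\la)\|B\lpom{\la}\|_H^2$. Now assemble: the quadratic term is $O(\sigma(\la)^2) = O(\eta_2(\la)^2) = o(\la\eta_2(\la))$ because $\eta_2(\la) = o(\la)$; and $\la\sigma(\la)\|B\lpom{\la}\|_H^2 = \la\sigma(\la)(\|B\pom\|_H^2 + o(1))$. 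This last term is the one I must cancel against the $-\la\eta_2(\la)$ coming from $\Som(\lpom{\la})$: from the displayed identity in the proof of Lemma \ref{lem:defsigma}, $\sigma(\la)(\|B\lpom{\la}\|_H^2 + \sigma(\la)Q(B\lpom{\la})) = \eta_2(\la)$, i.e. $\la\sigma(\la)\|B\lpom{\la}\|_H^2 = \la\eta_2(\la) - \la\sigma(\la)^2 Q(B\lpom{\la}) = \la\eta_2(\la) + o(\la\eta_2(\la))$. Adding everything, the $\la\eta_2(\la)$ terms cancel and we are left with $\Som(\Psi(\la)) - \Som(\pom) = \eta_1(\la) + o(\la\eta_2(\la)) + O(\eta_2(\la)^2)$, and since $\eta_1(\la)\sim\la\eta_2(\la)$ by assumption \eqref{eq:comparable}, both error terms are $o(\eta_1(\la))$, giving the claim.

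The main obstacle I anticipate is bookkeeping the error terms at the right order: the naive Taylor expansion only gives $\Som(\Psi(\la)) - \Som(\lpom{\la}) = \la\sigma(\la)\|B\pom\|_H^2 + o(\sigma(\la))$, and $o(\sigma(\la)) = o(\eta_2(\la))$ is \emph{not} small compared to $\eta_1(\la)\sim\la\eta_2(\la)$. So one cannot afford to be lossy in the linear term — one must keep $\sigma(\la)\|B\lpom{\la}\|_H^2$ exactly and substitute the defining relation for $\sigma(\la)$ from Lemma \ref{lem:defsigma} so that the leading linear contribution is identified as precisely $\la\eta_2(\la)$ up to $o(\la\eta_2(\la))$, producing the exact cancellation with the term from $\Som(\lpom{\la})$. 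The quadratic-remainder control (showing the $C^2$ error is $o(\sigma(\la)^2)$, hence harmless) is routine by comparison but still needs the uniform continuity of $E''$ near $\pom$.
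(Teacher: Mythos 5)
Your proof is correct, and every step checks out: the identity $\Som(\lpom{\la})-\Som(\pom)=\eta_1(\la)-\la\eta_2(\la)$, the linear term $\la\sigma(\la)\|B\lpom{\la}\|_H^2$, its conversion to $\la\eta_2(\la)+o(\la\eta_2(\la))$ via the defining relation for $\sigma$, and the final absorption of errors using $\eta_1(\la)\sim\la\eta_2(\la)$ and $\eta_2(\la)=o(\la)$ all hold. The underlying idea is the same as the paper's (Taylor expansion about $\lpom{\la}$ plus the smallness of $\sigma$), but you organize it differently: you expand $\Som$ itself, whose gradient at $\lpom{\la}$ is $\la B\lpom{\la}\neq 0$, and so you are forced to exhibit an exact cancellation between $-\la\eta_2(\la)$ and the linear term --- the very point you flag as the main obstacle. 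The paper sidesteps this entirely by writing $\Som=S_{\om+\la}+\la Q$ \emph{before} expanding: the constraint $Q(\Psi(\la))=Q(\pom)$ from Lemma \ref{lem:defsigma} handles the $\la Q$ piece exactly, and the remaining expansion is of $S_{\om+\la}$ about its own critical point $\lpom{\la}$, so the linear term vanishes identically and one is left with only $S_{\om+\la}(\lpom{\la})+\la Q(\pom)+O(\sigma(\la)^2)=d(\om+\la)-\la d'(\om)+o(\eta_1(\la))$. Your route costs an extra cancellation (and a slightly sharper remainder estimate than necessary --- $O(\sigma(\la)^2)$ already suffices, since $\sigma(\la)^2=o(\la\eta_2(\la))=o(\eta_1(\la))$); the paper's ordering buys a three-line proof. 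Both are valid.
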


\begin{proof}
By the definition of $S_{\om}$, we have $S_{\om}=S_{\om+\la}+\la Q$.
Using this and Taylor expansion, we have
\begin{eqnarray*}
S_{\om}(\Psi(\la))&=&S_{\om+\la}(\Psi(\la))+\la Q(\Psi(\la))\\
&=& S_{\om+\la}(\lpom{\la}+\sigma(\la)B\lpom{\la})+\la Q(\pom)\\
&=& S_{\om+\la}(\lpom{\la})+\la Q(\pom)+O(\sigma(\la)^2)\\
&=& d(\om+\la) -\la d'(\om) + o(\eta_1(\la)),\ \la\rightarrow 0.
\end{eqnarray*}
Here, we have used $Q(\Psi(\la))=Q(\pom)$ for the second equality, $S_{\om+\la}'(\lpom{\la})=0$ for the third equality and $\sigma(\la)=o(\la)$, $O(\la\sigma(\la))=O(\la\eta_2(\la))=O(\eta_1(\la))$ for the last equality.
Therefore, by (\ref{eq:eta1}), we have the conclusion.
\end{proof}

We define a tubular neighborhood of $\pom$.
Set
\begin{eqnarray*}
N_{\ep}&:=&\set{u\in X}{\inf_{s\in\R}||u-T(s)\pom||_X<\ep},\\
N_{\ep}^0&:=&\set{u\in N_{\ep}}{Q(u)=Q(\pom)}.
\end{eqnarray*}

\begin{Lem}\label{lem:modul}
Let $\ep>0$ sufficiently small.
Then for $u\in N_{\ep}$, there exists $\theta(u)$, $\La(u)$, $w(u)$ and $\al(u)$ such that
\begin{eqnarray*}
T(\theta(u))u=\Psi(\La(u))+w(u)+\al(u)B\lpom{\La(u)},
\end{eqnarray*}
and
\begin{eqnarray*}
\lr{w(u)}{T'(0)\lpom{\La(u)}}=\lr{w(u)}{\lppom{\La(u)}}=\lr{w(u)}{B\lpom{\La(u)}}=0.
\end{eqnarray*}
Further, $\La$ and $\theta$ are $C^2$.
\end{Lem}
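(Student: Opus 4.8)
The plan is to realize the decomposition as an application of the implicit function theorem around the bound state, treating $\theta$, $\La$, and $\al$ as the unknowns and $w$ as the remainder determined by the three orthogonality conditions. First I would fix $u = \pom$ itself: since $\Psi(0) = \pom$, the trivial choice $\theta = 0$, $\La = 0$, $\al = 0$, $w = 0$ solves the system at $u = \pom$. By $T(s)$-invariance it therefore also solves it at every $T(s)\pom$, so the equations are solvable on the whole orbit, which is the right base point for a tubular neighborhood argument.

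Next I would set up the map whose zero set encodes the decomposition. Define
\begin{eqnarray*}
G(u,\theta,\La,\al) := \Big(
\lr{T(\theta)u - \Psi(\La) - \al B\lpom{\La}}{T'(0)\lpom{\La}},\
\lr{T(\theta)u - \Psi(\La) - \al B\lpom{\La}}{\lppom{\La}},\
\lr{T(\theta)u - \Psi(\La) - \al B\lpom{\La}}{B\lpom{\La}}
\Big),
\end{eqnarray*}
so that $w(u) := T(\theta)u - \Psi(\La) - \al B\lpom{\La}$ automatically satisfies the required orthogonality relations precisely when $G = 0$. Note $G$ is $C^2$ in its arguments because $E \in C^2$, $\om \mapsto \pom$ is $C^3$, and $\sigma(\la)$ inherits $C^2$ regularity from the implicit function theorem in Lemma \ref{lem:defsigma} (one loses a derivative from the $C^3$ data, leaving $\Psi \in C^2$). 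At the base point $(T(s)\pom, s, 0, 0)$ one has $G = 0$, and I would compute the Jacobian of $G$ with respect to $(\theta,\La,\al)$ there. The three partial derivatives hit $\lr{\cdot}{T'(0)\pom}$, $\lr{\cdot}{\ppom}$, $\lr{\cdot}{B\pom}$ against the vectors $T'(0)\pom$ (from $\pd_\theta$), $-\ppom$ plus a multiple of $T'(0)\pom$ (from $\pd_\La$, using $\pd_\La \Psi(0) = \ppom$ and $\sigma'(0)$-terms), and $-B\pom$ (from $\pd_\al$). Using $\lr{T'(0)\pom}{\ppom} = 0$ from Assumption \ref{ass:bddst}(iii), $d''(\om) = -\lr{B\pom}{\ppom} = 0$, and $\lr{T'(0)\pom}{B\pom} = \lr{T'(0)\pom}{J^{-1}T'(0)\pom} = 0$ (skew-symmetry of $J$), the matrix becomes block-triangular with diagonal entries $\|T'(0)\pom\|_H^2$, $-\lr{\ppom}{\ppom} \neq 0$ (since $\ppom \neq 0$), and $-\|B\pom\|_H^2 \neq 0$; hence it is invertible. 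The implicit function theorem then yields $C^2$ functions $\theta(u)$, $\La(u)$, $\al(u)$ near each $T(s)\pom$, and then $w(u)$ is defined by the displayed formula.

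The remaining point is to glue the local solutions into a single choice valid on all of $N_\ep$. Because $G$ is equivariant — replacing $u$ by $T(\tau)u$ and $\theta$ by $\theta - \tau$ leaves $G$ unchanged — the solution obtained near $T(s_0)\pom$ is just the $T(s)$-translate of the one near $\pom$; so I would only solve near $\pom$ and then extend by $\theta(T(\tau)u) = \theta(u) - \tau$, $\La(T(\tau)u) = \La(u)$, $\al(T(\tau)u) = \al(u)$. Uniqueness of $\theta \bmod 2\pi$ (or $\bmod$ the period, or uniqueness in a slice if $T$ is not periodic) follows from the nondegeneracy of the Jacobian together with shrinking $\ep$; this is standard and identical to the modulation lemma in \cite{GSS}. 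The main obstacle, and the only genuinely delicate part, is verifying that the Jacobian is invertible, i.e. checking that all the "cross" inner products vanish at the base point so that the matrix is triangular with nonzero diagonal; this is exactly where Assumption \ref{ass:bddst}(iii), the hypothesis $d''(\om) = 0$, and skew-symmetry of $J$ are used, and once those orthogonalities are in hand the rest is a routine application of the implicit function theorem plus $T$-equivariance. One should also note that $B\lpom{\La} = J^{-1}T'(0)\lpom{\La} \in X$ by the Remark following Assumption \ref{ass:bddst}, so all vectors appearing live in $X$ and the inner products make sense in $X \times X^*$.
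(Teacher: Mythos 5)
Your proof is correct and follows essentially the same route as the paper: an implicit-function-theorem modulation argument based at $\pom$, with the Jacobian made invertible by the orthogonality relations $\lr{T'(0)\pom}{\ppom}=0$, $\lr{B\pom}{T'(0)\pom}=0$ and $d''(\om)=-\lr{B\pom}{\ppom}=0$, and then extended to all of $N_\ep$ by $T$-equivariance. The only difference is that you solve a $3\times 3$ system in $(\theta,\La,\al)$, whereas the paper solves a $2\times 2$ system in $(\theta,\La)$ and then defines $\al$ by explicit orthogonal projection onto $B\lpom{\La}$; your variant has the minor advantage of yielding $\lr{w(u)}{\lppom{\La(u)}}=0$ exactly, while the paper's construction gives it only up to the term $-\al(u)\lr{B\lpom{\La(u)}}{\lppom{\La(u)}}=\al(u)\,d''(\om+\La(u))$, which is small but not identically zero.
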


\begin{proof}
Set
\begin{eqnarray*}
G(u,\theta,\La)=
\left(
  \begin{matrix}
        \lr{T(\theta)u-\Psi(\La)}{T'(0)\lpom{\La}} \cr
        \lr{T(\theta)u-\Psi(\La)}{\lppom{\La}} \cr
  \end{matrix}
\right).
\end{eqnarray*}
Then, we have $G(\pom,0,0)=0$ and
\begin{eqnarray}\label{eq:G'}
\frac{\pd G}{\pd (\theta, \La)}=(G_{ij}(u,\theta,\La))_{i,j=1,2},
\end{eqnarray}
where
\begin{eqnarray*}
G_{11}(u,\theta,\La)&=&\lr{T'(0)T(\theta)u}{T'(0)\lpom{\La}}, \\
G_{12}(u,\theta,\La)&=&-\lr{\pd_\la\Psi(\La)}{T'(0)\lpom{\La}}+\lr{T(\theta)u-\Psi(\La)}{T'(0)\lppom{\La}},\\
G_{21}(u,\theta,\La)&=&\lr{T'(0)T(\theta)u}{\lppom{\La}},\\
G_{22}(u,\theta,\La)&=&-\lr{\pd_\la\Psi(\La)}{\lppom{\La}}+\lr{T(\theta)u-\Psi(\La)}{\pd_{\om}^2\lpom{\La}}.
\end{eqnarray*}
Therefore,
\begin{eqnarray*}
\left.\frac{\pd G}{\pd (\theta, \La)}\right|_{u=\pom,\theta=\La=0}=
\left(
  \begin{matrix}
        ||T'(0)\pom||_H^2 & 0 \cr
        0 & -||\ppom||_H^2 \cr
  \end{matrix}
\right),
\end{eqnarray*}
is invertible.
Thus, there exist functionals $\theta(u)$ and $\La(u)$ defined in the neighborhood of $\pom$ such that $G(u,\theta(u),\La(u))=0$.
Since, $\om'\mapsto \phi_{\om'}$ is a $C^3$ map, we see that $G$ is $C^2$.
Therefore, we have  $\La$ and $\theta$ are $C^2$.
For $u\in N_{\ep}$, define $\theta(T(s)u)=\theta(u)-s$ and $\La(T(s)u)=\La(u)$.
Finally, define
\begin{eqnarray*}
\al(u)&=&\lr{T(\theta(u))u-\Psi(\La(u))}{B\lpom{\La(u)}}||B\lpom{\La(u)}||_H^{-2},\\
w(u)&=&T(\theta(u))u-\Psi(\La(u))-\al(u)B\lpom{\La(u)}.
\end{eqnarray*}
Therefore, we have the conclusion.
\end{proof}

Let $\ep>0$ as Lemma \ref{lem:modul}.
Set
\begin{eqnarray*}
M(u):=T(\theta(u))u,\ u\in N_{\ep}.
\end{eqnarray*}

\begin{Rem}
By the uniqueness of the solution of $G=0$, we have
\begin{eqnarray*}
\theta(\Psi(\la))=0,\ 
\la(\Psi(\la))=\la,\\
w(\Psi(\la))=0,\ 
\al(\Psi(\la))=0.\\
\end{eqnarray*}
\end{Rem}

We next show that the Fr\'echet derivatives of $\theta$ and $\La$ are in $Y$.

\begin{Lem}\label{lem:regularity}
Let $\ep>0$ sufficiently small.
Let $u\in N_{\ep}$.
Then, $\theta'(u)$, $\La'(u)\in Y$.
\end{Lem}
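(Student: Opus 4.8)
The plan is to differentiate the defining relations of $\theta(u)$ and $\La(u)$ and solve for the derivatives via the implicit function theorem, then read off membership in $Y$ from the fact that $J$ maps $X$ into $Y$. Recall that $(\theta(u),\La(u))$ is defined by $G(u,\theta(u),\La(u))=0$, where the two components of $G$ are the pairings of $T(\theta)u-\Psi(\La)$ against $T'(0)\lpom{\La}$ and against $\lppom{\La}$. Differentiating $G(u,\theta(u),\La(u))=0$ in $u$ in the direction $h\in X$ gives
\begin{eqnarray*}
\frac{\pd G}{\pd(\theta,\La)}\begin{pmatrix}\lr{\theta'(u)}{h}\\ \lr{\La'(u)}{h}\end{pmatrix}
= -\begin{pmatrix}\lr{T(\theta(u))h}{T'(0)\lpom{\La(u)}}\\ \lr{T(\theta(u))h}{\lppom{\La(u)}}\end{pmatrix}.
\end{eqnarray*}
First I would observe that, by Lemma \ref{lem:modul} and shrinking $\ep$, the $2\times 2$ matrix $\frac{\pd G}{\pd(\theta,\La)}$ is invertible on $N_\ep$ (it equals the invertible diagonal matrix $\mathrm{diag}(\|T'(0)\pom\|_H^2,-\|\ppom\|_H^2)$ at $u=\pom,\theta=\La=0$ and depends continuously on its arguments), so I can invert it and express $\lr{\theta'(u)}{h}$ and $\lr{\La'(u)}{h}$ as explicit linear combinations of $\lr{T(\theta(u))h}{T'(0)\lpom{\La(u)}}$ and $\lr{T(\theta(u))h}{\lppom{\La(u)}}$.

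Next, since $T(\theta(u))$ is unitary on $X$ (and its adjoint with respect to $\lr{\cdot}{\cdot}$ is $T(-\theta(u))$), each term $\lr{T(\theta(u))h}{\,\cdot\,}$ equals $\lr{h}{T(-\theta(u))\,\cdot\,}$. Thus $\theta'(u)$ and $\La'(u)$, as elements of $X^*$, are each of the form $c_1 T(-\theta(u))T'(0)\lpom{\La(u)} + c_2 T(-\theta(u))\lppom{\La(u)}$ for scalars $c_1,c_2$ depending on $u$. Here is where the regularity comes in: by Assumption \ref{ass:bddst}(ii), $\Tpom\in Y$ and $\ppom\in Y$ for every frequency in $(\om_1,\om_2)$, and in particular $T'(0)\lpom{\La(u)}\in Y$ and $\lppom{\La(u)}\in Y$; since $\tilde T(s)(Y)\subset Y$ by assumption, applying $T(-\theta(u))$ keeps these vectors in $Y$. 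Therefore $\theta'(u)$ and $\La'(u)$ are finite linear combinations of elements of $Y$, hence lie in $Y$, and the proof is complete.

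The only genuinely delicate point is ensuring that the vectors $T'(0)\lpom{\La(u)}$ and $\lppom{\La(u)}$ — which the formula for the derivatives picks out — are really in $Y$ for the \emph{nearby} frequency $\om+\La(u)$, not just at $\om$; but this is covered verbatim by Assumption \ref{ass:bddst}(ii), which is stated for all $\om$ in the interval, so this is not a true obstacle. A minor bookkeeping step is to verify that the scalar coefficients $c_1(u),c_2(u)$ are well-defined (finite), which follows from invertibility of $\frac{\pd G}{\pd(\theta,\La)}$ already established. Everything else is routine.
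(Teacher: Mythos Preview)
Your proof is correct and follows essentially the same approach as the paper: differentiate the implicit relation $G(u,\theta(u),\La(u))=0$, invert the $2\times 2$ Jacobian (invertible for small $\ep$), and observe that $\theta'(u),\La'(u)$ are linear combinations of $T(-\theta(u))T'(0)\lpom{\La(u)}$ and $T(-\theta(u))\lppom{\La(u)}$, which lie in $Y$ by Assumption~\ref{ass:bddst}(ii) and the hypothesis $\tilde T(s)(Y)\subset Y$. Your write-up is in fact more explicit than the paper's about the role of $\tilde T(s)(Y)\subset Y$; the opening remark about ``$J$ maps $X$ into $Y$'' is not actually what you use, but the body of the argument is right.
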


\begin{proof}
By differentiating $G(u,\theta(u),\La(u))=0$ with respect to $u$, we have
\begin{eqnarray}\label{eq:G'}
H(u)
\left(
  \begin{matrix}
        \theta'(u) \cr
        \La'(u) \cr
  \end{matrix}
\right)=
-\left(
  \begin{matrix}
        T(-\theta(u))T'(0)\lpom{\La(u)} \cr
        T(-\theta(u))\lppom{\La(u)} \cr
  \end{matrix}
\right),
\end{eqnarray}
where $H(u)=(G_{i,j}(u,\theta(u),\La(u)))_{i,j=1,2}$.
Since $H(u)$ is invertible in $N_{\ep}$ for sufficiently small $\ep>0$ and $T'(0)\lpom{\La(u)}\in Y$, $\lppom{\La(u)}\in Y$, we have the conclusion.
\end{proof}

\begin{Rem}
As the proof of Lemma $\ref{lem:regularity}$,
by differentiating $(\ref{eq:G'})$ with respect to $u$, we see that $\theta''(u)w\in Y$ and $\La''(u)w\in Y$ for $u\in N_{\ep}$ and $w\in X$.
\end{Rem}

Let $\ep>0$ sufficiently small.
We now introduce the following functionals $A$ and $P$ defined in $N_{\ep}$, which we use to show the instability theorem.
\begin{eqnarray*}
A(u)&:=&\lr{M(u)}{J^{-1}\lppom{\La(u)}},\\
P(u)&:=&\lr{S_{\om+\La(u)}'(u)}{JA'(u)}.
\end{eqnarray*}

\begin{Rem}\label{rem:regularity}
$A$ and $P$ are well-defined in $N_{\ep}$ for sufficiently small $\ep>0$.
Indeed,
\begin{eqnarray}\label{eq:A'}
A'(u)&=&J^{-1}T(-\theta(u))\lppom{\La(u)}+\lr{T'(0)M(u)}{J^{-1}\lppom{\La(u)}}\theta'(u)\nonumber\\&&+\lr{M(u)}{J^{-1}\pd_{\om}^2\phi_{\om+\La(u)}}\La'(u).
\end{eqnarray}
So, by Assumption $\ref{ass:bddst}$ and Lemma $\ref{lem:regularity}$, we have $A'(u)\in Y$.
Therefore, we have $JA'(u)\in X$.
Therefore, the definition of $P$ makes sense.
\end{Rem}

\begin{Rem}
Let $u$ be the solution of $(\ref{eq:hpde})$, then
\begin{eqnarray}\label{eq:AP}
\frac{d}{dt}A(u(t))=-P(u(t)).
\end{eqnarray}
Indeed, first, since $A(T(s)u)=A(u)$, for $u\in D(T'(0))$,
\begin{eqnarray*}
0=\lr{A'(u)}{T'(0)u}=-\lr{Bu}{JA'(u)}.
\end{eqnarray*}
Therefore, formally, we have
\begin{eqnarray*}
\frac{d}{dt}A(u(t))=\lr{A'(u)}{u_t}=\lr{A'(u)}{\tilde{J}E'(u)}=-\lr{E'(u)}{JA'(u)}=-P(u).
\end{eqnarray*}
By Lemma $4.6$ of $\cite{GSS}$, we have $A\circ u \in C^1$ for $u\in C({\cal I},X)\cap C^1({\cal I},Y^*)$.
Therefore, the formal calculation is justified.
\end{Rem}

\begin{Rem}
$A$ and $P$ are invariant under $T$, that is
\begin{eqnarray*}
A(T(s)u)&=&A(u),\\
P(T(s)u)&=&P(u).
\end{eqnarray*}
Indeed, the invariance of $A$ follows from the invariance of $M$ and $\La$.
The invariance of $P$ follows from the invariance of $S$ and $A$.
More precisely, since
$A(T(s)u+h)=A(u+T(-s)h)$, we have $A'(T(s)u)=T(s)A'(u)$.
So, we have
\begin{eqnarray*}
P(T(s)u)=\lr{S'(T(s)u)}{JA'(T(s)u)}=\lr{T(s)S'(u)}{JT(s)A'(u)}=P(u),
\end{eqnarray*}
where we have used the fact $J$ and $T(s)$ commutes.
\end{Rem}

We now calculate the value of $P$ along the curve $\Psi$.
\begin{Lem}
Let $\ep>0$ sufficiently small.
Assume $\lr{\pom}{J^{-1}\pd_{\om}^2\pom}=0$.
Then, for $|\la|<\ep$, we have
\begin{eqnarray*}
P(\Psi(\la))=\eta_2(\la)+o(\eta_2(\la)),\ \la\rightarrow 0.
\end{eqnarray*}
\end{Lem}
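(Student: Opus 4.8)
The plan is to evaluate $P(\Psi(\la))$ directly from the definitions, exploiting the normalization identities recorded in the Remark after Lemma~\ref{lem:modul}: $\theta(\Psi(\la))=0$, $\La(\Psi(\la))=\la$, $w(\Psi(\la))=0$, $\al(\Psi(\la))=0$, and hence $M(\Psi(\la))=\Psi(\la)$. Plugging these into the definition of $P$ gives
\[
P(\Psi(\la))=\lr{S_{\om+\la}'(\Psi(\la))}{JA'(\Psi(\la))},
\]
a legitimate $X^*$--$X$ pairing since $JA'(\Psi(\la))\in X$ by Remark~\ref{rem:regularity}. Because $\Psi(\la)=\lpom{\la}+\sigma(\la)B\lpom{\la}$ and $S_{\om+\la}'(\lpom{\la})=0$, a first-order Taylor expansion of the $C^1$ map $S_{\om+\la}'$ yields $S_{\om+\la}'(\Psi(\la))=\sigma(\la)S_{\om+\la}''(\lpom{\la})B\lpom{\la}+O(\sigma(\la)^2)$ in $X^*$. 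Since $\la\mapsto JA'(\Psi(\la))$ is continuous into $X$ (hence locally bounded) and $\sigma(\la)\sim\eta_2(\la)$ by Lemma~\ref{lem:defsigma}, the quadratic remainder contributes $O(\sigma(\la)^2)=o(\eta_2(\la))$ to the pairing, so it suffices to analyse $\sigma(\la)\,\lr{S_{\om+\la}''(\lpom{\la})B\lpom{\la}}{JA'(\Psi(\la))}$.

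The core step is to identify $\lim_{\la\to0}JA'(\Psi(\la))$. Substituting the normalization identities into the formula~(\ref{eq:A'}) for $A'$ and applying $J$,
\[
JA'(\Psi(\la))=\lppom{\la}+\lr{T'(0)\Psi(\la)}{J^{-1}\lppom{\la}}\,J\theta'(\Psi(\la))+\lr{\Psi(\la)}{J^{-1}\pd_{\om}^2\lpom{\la}}\,J\La'(\Psi(\la)).
\]
Both scalar coefficients tend to $0$ as $\la\to0$: using the skew-symmetry of $J^{-1}$ together with $B=J^{-1}T'(0)$ one has $\lr{T'(0)\Psi(\la)}{J^{-1}\lppom{\la}}=-\lr{B\Psi(\la)}{\lppom{\la}}\to-\lr{B\pom}{\ppom}=d''(\om)=0$, while $\lr{\Psi(\la)}{J^{-1}\pd_{\om}^2\lpom{\la}}\to\lr{\pom}{J^{-1}\npom{2}}=0$ by the hypothesis of the lemma. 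Since $\theta'$ and $\La'$ are continuous with values in $Y$ (Lemmas~\ref{lem:modul} and~\ref{lem:regularity}) and $J\colon Y\to X$ is bounded, $J\theta'(\Psi(\la))$ and $J\La'(\Psi(\la))$ stay bounded in $X$, so $JA'(\Psi(\la))\to\ppom$ in $X$. This is the step I expect to be the main obstacle, precisely because it is here that the two cancellations $d''(\om)=0$ and $\lr{\pom}{J^{-1}\npom{2}}=0$ are both indispensable, and one has to keep track of the continuity of all the modulation quantities.

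It then remains to assemble the estimate. By continuity (from Assumption~\ref{ass:bddst} and $E\in C^2$), $S_{\om+\la}''(\lpom{\la})B\lpom{\la}\to\Hom B\pom$ in $X^*$, so using the self-adjointness of $\Hom$ and the relation $\Hom\ppom=B\pom$ from~(\ref{eq:SB}),
\[
\lr{S_{\om+\la}''(\lpom{\la})B\lpom{\la}}{JA'(\Psi(\la))}\ \longrightarrow\ \lr{\Hom B\pom}{\ppom}=\lr{B\pom}{\Hom\ppom}=\|B\pom\|_H^2\qquad(\la\to0).
\]
Finally, from the identity $\sigma(\la)\big(\|B\phi_{\om+\la}\|_H^2+\sigma(\la)Q(B\phi_{\om+\la})\big)=\eta_2(\la)$ established inside the proof of Lemma~\ref{lem:defsigma}, dividing by $\|B\pom\|_H^2+o(1)$ gives the sharp asymptotics $\sigma(\la)=\|B\pom\|_H^{-2}\eta_2(\la)+o(\eta_2(\la))$. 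Combining the last three facts,
\[
P(\Psi(\la))=\sigma(\la)\big(\|B\pom\|_H^2+o(1)\big)+o(\eta_2(\la))=\eta_2(\la)+o(\eta_2(\la)),\qquad\la\to0,
\]
which is the assertion.
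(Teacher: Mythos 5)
Your proof is correct and follows essentially the same route as the paper: expand $S_{\om+\la}'$ around $\lpom{\la}$ to isolate $\sigma(\la)S_{\om+\la}''(\lpom{\la})B\lpom{\la}$, show that the two modulation terms in $JA'(\Psi(\la))$ vanish in the limit thanks to the cancellations $d''(\om)=-\lr{B\pom}{\ppom}=0$ and $\lr{\pom}{J^{-1}\npom{2}}=0$, and conclude via $\Hom\ppom=B\pom$ together with $\sigma(\la)=\|B\pom\|_H^{-2}\eta_2(\la)+o(\eta_2(\la))$. The only cosmetic point is that under $E\in C^2$ the Taylor remainder for $S_{\om+\la}'$ is $o(\sigma(\la))$ rather than $O(\sigma(\la)^2)$, but this is all you need since $\sigma(\la)\sim\eta_2(\la)$.
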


\begin{proof}
First, we calculate $S_{\om+\La(\Psi(\la))}'(\Psi(\la))$.
\begin{eqnarray*}
S_{\om+\La(\Psi(\la))}'(\Psi(\la))&=&S_{\om+\la}'(\phi_{\om+\la}+\sigma(\la)B\lpom{\la})\\
&=&\sigma(\la)S_{\om+\la}''(\lpom{\la})B\lpom{\la}+o(\sigma(\la)).
\end{eqnarray*}
Next, we calculate $JA'(\Psi(\la))$.
Recall that $M(\Psi(\la))=\Psi(\la)=\lpom{\la}+\sigma(\la)B\lpom{\la}$ and we assumed \begin{eqnarray*}
-\lr{B\pom}{\ppom}=d''(\om)=0,
\end{eqnarray*}
and $\lr{\pom}{J^{-1}\pd_{\om}^2\pom}=0$.
So, we have
\begin{eqnarray*}
\lr{T'(0)M(\Psi(\la))}{J^{-1}\lppom{\la}}&=&o(1),\ \la\rightarrow 0,\\
\lr{M(\Psi(\la))}{J^{-1}\pd_{\om}^2\phi_{\om+\la}}&=&o(1),\ \la\rightarrow 0.
\end{eqnarray*}
Therefore, by (\ref{eq:A'}), we have
\begin{eqnarray*}
JA'(\Psi(\la))=\lppom{\la}+o(1),\ \la\rightarrow 0.
\end{eqnarray*}
Combining these calculations, we have
\begin{eqnarray*}
P(\Psi(\la))&=&\sigma(\la)\lr{S_{\om+\la}''(\lpom{\la})B\lpom{\la}}{\lppom{\la}}+o(\sigma(\la))\\
&=&\sigma(\la)||B\lpom{\la}||_{H}+o(\sigma(\la))\\
&=&\eta_2(\la)+o(\eta_2(\la)),\ \la\rightarrow 0,
\end{eqnarray*}
where we have used the relation $S_{\om+\la}''(\lpom{\la})\lppom{\la}=B\lpom{\la}$.
\end{proof}

The following lemma is well known.
For example see \cite{Ohta11} Lemma 7.

\begin{Lem}\label{lem:pos}
There exists $k_0>0$ such that if $w\in X$ satisfy $\lr{w}{\Tpom}=\lr{w}{\ppom}=\lr{w}{B\pom}=0$,
then $\lr{\Hom w}{w}\geq k_0||w||_X^2$.
\end{Lem}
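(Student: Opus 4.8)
\emph{Proof strategy.} Write $L:=\Hom$ and $p:=\Tpom$. The plan is the classical two–step scheme: first establish strict positivity of $\lr{Lw}{w}$ on the constrained subspace, then upgrade it to the quantitative $X$–norm bound. By Assumption \ref{ass:spec} we split $X$ orthogonally, with respect to the $H$–inner product, into the spectral subspaces of $L$,
\[
X=\mathrm{span}\{\chi\}\ \oplus\ \mathrm{span}\{p\}\ \oplus\ X_+,
\]
where $L\chi=-\mu\chi$, $\|\chi\|_H=1$, $\ker L=\mathrm{span}\{p\}$, and $\lr{Lv}{v}\geq\de\|v\|_H^2$ for $v\in X_+$ with $\de=\inf\set{s>0}{s\in\sigma(L)}>0$. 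I will use two identities: $L\ppom=B\pom$, which is $(\ref{eq:SB})$, and $\lr{\ppom}{B\pom}=\lr{L\ppom}{\ppom}=-d''(\om)=0$, since we are in the degenerate case. Expanding $\ppom=c_0\chi+c_1p+\rho$ with $\rho\in X_+$, we get $B\pom=L\ppom=-\mu c_0\chi+L\rho$ and, from $\lr{L\ppom}{\ppom}=0$, the relation $\lr{L\rho}{\rho}=\mu c_0^2$. Moreover $c_0\neq 0$: otherwise $\lr{L\rho}{\rho}=0$ forces $\rho=0$, hence $\ppom=c_1p\in\ker L$, which with $\lr{p}{\ppom}=0$ and $p\neq 0$ from Assumption \ref{ass:bddst}(iii) gives $c_1=0$, i.e.\ $\ppom=0$, contradicting Assumption \ref{ass:bddst}(iii).

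Next, let $w\in X$ satisfy the three orthogonality conditions. From $\lr{w}{p}=0$ and $\chi\perp p$, $X_+\perp p$, the $p$–component of $w$ vanishes, so $w=a\chi+v$ with $v\in X_+$. Using the expansions above together with the orthogonality of $\chi$ to $p,\rho$ and of $v$ to $\chi,p$, the remaining two conditions become
\[
ac_0=-\lr{v}{\rho},\qquad a\mu c_0=\lr{Lv}{\rho}.
\]
Since $\lr{L\cdot}{\cdot}$ is an inner product on $X_+$, Cauchy–Schwarz together with $\lr{L\rho}{\rho}=\mu c_0^2$ gives
\[
(a\mu c_0)^2=\lr{Lv}{\rho}^2\leq\lr{Lv}{v}\,\lr{L\rho}{\rho}=\mu c_0^2\,\lr{Lv}{v},
\]
so, dividing by $\mu c_0^2>0$ (here $c_0\neq 0$ is essential), $\mu a^2\leq\lr{Lv}{v}$, whence $\lr{Lw}{w}=-\mu a^2+\lr{Lv}{v}\geq 0$. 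If equality holds, the equality case of Cauchy–Schwarz gives $v=t\rho$ for a scalar $t$ (note $\rho\neq 0$, else $c_0=0$); substituting into the two displayed relations yields $a=tc_0$ and $t(c_0^2+\|\rho\|_H^2)=0$, hence $t=0$ and $w=0$. Therefore $\lr{Lw}{w}>0$ for every nonzero $w$ in the constrained subspace.

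The passage from this strict positivity to $\lr{Lw}{w}\geq k_0\|w\|_X^2$ is the routine part. Arguing by contradiction, take $w_n$ in the (closed, finite–codimensional) constrained subspace with $\|w_n\|_X=1$ and $\lr{Lw_n}{w_n}$ converging to a value $\leq 0$; write $w_n=a_n\chi+v_n$, extract a weakly convergent subsequence, and use the spectral gap $\de$ on $X_+$ (which controls $\|v_n\|_H$, and through the ellipticity built into the framework also $\|v_n\|_X$), the finite rank of the non–positive part of $L$, and weak lower semicontinuity of $\lr{L\cdot}{\cdot}$ to produce a nonzero element of the constrained subspace on which $\lr{L\cdot}{\cdot}\leq 0$, contradicting the previous step. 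This is exactly the mechanism of \cite{GSS}; see also \cite{Ohta11}, Lemma 7.

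The main obstacle is the positivity step. Because $d''(\om)=0$, the standard two–constraint coercivity lemma of \cite{GSS} — which requires $\lr{L^{-1}B\pom}{B\pom}\neq 0$ — is unavailable, and one must bring in the extra orthogonality $\lr{w}{B\pom}=0$ and combine it with the exact identity $\lr{L\rho}{\rho}=\mu c_0^2$. The Cauchy–Schwarz inequality on the positive spectral subspace is precisely what couples these two facts, and the analysis of its equality case — where the non-degeneracy $c_0\neq 0$, itself a consequence of Assumption \ref{ass:bddst}(iii), is used — is what forces $w=0$.
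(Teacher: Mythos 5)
The paper does not prove this lemma at all: it simply declares it ``well known'' and refers to Lemma 7 of \cite{Ohta11}, so there is no in-paper argument to compare against. Your proof is correct and is, in substance, the standard argument behind that citation: the $H$-orthogonal spectral splitting $X=\mathrm{span}\{\chi\}\oplus\ker\Hom\oplus X_+$, the identity $\lr{\Som''(\pom)\rho}{\rho}=\mu c_0^2$ extracted from $(\ref{eq:SB})$ together with $d''(\om)=-\lr{B\pom}{\ppom}=0$ (which is indeed in force, since the lemma sits in Section \ref{sec:p} where $d''(\om)=0$ is assumed), the non-vanishing $c_0\neq 0$ via Assumption \ref{ass:bddst}(iii), and the Cauchy--Schwarz inequality in the $\lr{\Hom\cdot}{\cdot}$ inner product on $X_+$ with its equality-case analysis. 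I checked the two constraint identities $ac_0=-\lr{v}{\rho}$ and $a\mu c_0=\lr{Lv}{\rho}$ and the conclusion $w=0$ in the equality case; all are right (including the degenerate subcase $a=0$, where positive definiteness on $X_+$ alone forces $v=0$). The only point worth flagging is in your second step: Assumption \ref{ass:spec}(iii) literally yields coercivity of $\lr{\Hom v}{v}$ on $X_+$ in the $H$-norm, and the upgrade to the $X$-norm bound $k_0\|w\|_X^2$ requires a G\aa rding-type inequality ($\lr{\Hom v}{v}\geq c\|v\|_X^2-C\|v\|_H^2$) that is not stated explicitly in the paper's hypotheses; you acknowledge this as ``ellipticity built into the framework,'' which is the same implicit reliance the paper makes by citing \cite{GSS} and \cite{Ohta11}, so it is not a defect of your argument relative to the source.
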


By a continuity argument and Lemma \ref{lem:pos}, we can show the following lemma.

\begin{Lem}\label{lem:pos2}
There exists $\ep_0>0$ such that for $|\la|<\ep_0$, if $w\in X$ satisfies $\lr{w}{T'(0)\phi_{\om+\la}}=\lr{w}{\lppom{\la}}=\lr{w}{B\lpom{\la}}=0$,
then $\lr{\Hom w}{w}\geq \half k_0||w||_X^2$.
\end{Lem}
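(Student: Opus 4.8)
The plan is to obtain this as a perturbation of Lemma~\ref{lem:pos}, keeping the operator $\Hom$ fixed while letting only the three orthogonality conditions vary with $\la$. Write $v_1^\la:=T'(0)\lpom{\la}$, $v_2^\la:=\lppom{\la}$, $v_3^\la:=B\lpom{\la}$, abbreviate $v_i:=v_i^0$, and set
\[
H^\la:=\set{w\in X}{\lr{w}{v_1^\la}=\lr{w}{v_2^\la}=\lr{w}{v_3^\la}=0}.
\]
Lemma~\ref{lem:pos} says $\lr{\Hom w}{w}\geq k_0\|w\|_X^2$ on $H^0$, and we must produce the same bound, with constant $\tfrac12 k_0$, on $H^\la$ for $|\la|$ small.

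First I would record two facts at $\la=0$. By $(\ref{eq:SB})$ one has $v_3=\Hom\ppom$; combining this with Assumption~\ref{ass:spec}(i), with $d''(\om)=-\lr{B\pom}{\ppom}=0$, with the symmetry of $\Hom$, and with $\lr{\Tpom}{\ppom}=0$ from Assumption~\ref{ass:bddst}(iii), the vectors $v_1,v_2,v_3$ are mutually orthogonal in $H$; and they are all nonzero, since $v_3=\Hom\ppom=0$ would force $\ppom\in\mathrm{ker}\,\Hom=\mathrm{span}\{\Tpom\}$, contradicting $\ppom\perp\Tpom$, $\ppom\neq0$. Hence the Gram matrix $(\lr{v_i}{v_j})_{i,j=1,2,3}$ is invertible. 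Second, the constraint vectors depend continuously on $\la$ in the topology of $X^*$: $\lppom{\la}\to\ppom$ in $X$ because $\om\mapsto\pom$ is $C^3$; $B\lpom{\la}\to B\pom$ in $X^*$ because $B:X\to X^*$ is bounded; and $T'(0)\lpom{\la}\to\Tpom$ in $X^*$, which uses the identity $T'(0)\lpom{\la}=\tilde{J}B\lpom{\la}$ together with the mapping properties of $J$ and the regularity in Assumption~\ref{ass:bddst}(ii) to transfer the previous convergence.

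Granting these, the argument is linear algebra plus the boundedness $\Hom:X\to X^*$. Let $w\in H^\la$ with $\|w\|_X=1$, and define $\tilde w:=w-\sum_{i=1}^3 c_i v_i$, choosing the $c_i$ so that $\tilde w\in H^0$; since $\lr{w}{v_j^\la}=0$, the $c_i$ solve a linear system with the fixed invertible Gram matrix above and right-hand sides $\lr{w}{v_j-v_j^\la}$, whence $|c_i|\leq C\max_j\|v_j-v_j^\la\|_{X^*}=o(1)$ as $\la\to0$, so $\|\tilde w-w\|_X=o(1)$ and $\|\tilde w\|_X=1+o(1)$. Expanding $\lr{\Hom w}{w}=\lr{\Hom\tilde w}{\tilde w}+2\lr{\Hom\tilde w}{w-\tilde w}+\lr{\Hom(w-\tilde w)}{w-\tilde w}$, bounding the last two terms by $\|\Hom\|_{X\to X^*}$ times norms that are $o(1)$, and applying Lemma~\ref{lem:pos} to the first term, we obtain $\lr{\Hom w}{w}\geq k_0\|\tilde w\|_X^2-o(1)=k_0-o(1)\geq\tfrac12 k_0$ once $|\la|<\ep_0$; homogeneity in $w$ then removes the normalization.

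I expect the only genuinely non-routine point to be the $X^*$-continuity of $\la\mapsto T'(0)\lpom{\la}$, since $T'(0)$ is unbounded and the $C^3$-dependence of $\pom$ is only stated into $X$; everything else — the orthogonality relations at $\la=0$, the continuity of the other two vectors, and the perturbation estimate — is elementary. For that step I would lean on $\Tpom=\tilde{J}B\pom$ and the boundedness and bijectivity of $J$ between $X$, $Y$ and their duals, so that the already-established $X^*$-convergence of $B\lpom{\la}$ yields the convergence of $T'(0)\lpom{\la}$, with Assumption~\ref{ass:bddst}(ii) guaranteeing that all the objects involved lie in the required spaces.
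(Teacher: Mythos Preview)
Your argument is correct and is exactly the continuity/perturbation argument the paper has in mind; the paper itself supplies no details beyond the sentence ``By a continuity argument and Lemma~\ref{lem:pos}, we can show the following lemma'', so your proposal in fact carries out what the paper leaves implicit. The one delicate step you single out --- the continuity in $\la$ of the constraint vector $T'(0)\lpom{\la}$ as a functional on $X$ --- is indeed the only place needing care, and your use of $T'(0)\phi=JB\phi$ together with Assumption~\ref{ass:bddst}(ii) is the intended route.
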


\section{Proof of Theorems \ref{thm:stability} and \ref{thm:instability}}\label{sec:pt}
In this section we prove Theorems \ref{thm:stability} and \ref{thm:instability}.
As section \ref{sec:p}, we assume Assumptions \ref{ass:sol}, \ref{ass:bddst}, \ref{ass:spec}, (\ref{eq:comparable}) and $d''(\om)=0$.
We first estimate $\al(u)$ which is given in Lemma \ref{lem:modul}.

\begin{Lem}\label{lem:est_al}
Let $\ep>0$ sufficiently small.
Let $u\in N_{\ep}^0$.
Let $\sigma$ as in Lemma $\ref{lem:defsigma}$ and $\al(u)$, $w(u)$ and $\La$ as in Lemma $\ref{lem:modul}$.
Then, there exists a constant $C>0$ such that
\begin{eqnarray*}
|\al(u)|\leq C(\sigma(\La(u))||w(u)||_X+||w(u)||_X^2).
\end{eqnarray*}
\end{Lem}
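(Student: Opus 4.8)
The plan is to extract the bound from the single scalar constraint that distinguishes $N_\ep^0$ inside $N_\ep$, namely $Q(u)=Q(\pom)$, combined with the orthogonality relations for $w(u)$ supplied by Lemma \ref{lem:modul}. Writing $\La=\La(u)$, $w=w(u)$, $\al=\al(u)$, recall $M(u)=T(\theta(u))u=\Psi(\La)+w+\al B\lpom{\La}$. First I would use the $T$-invariance of $Q$ to get $Q(M(u))=Q(u)=Q(\pom)$, and Lemma \ref{lem:defsigma} to get $Q(\Psi(\La))=Q(\pom)$, so that $Q(M(u))-Q(\Psi(\La))=0$. Since $Q(v)=\half\lr{Bv}{v}$ with $B=B^*$ bounded from $X$ to $X^*$, expanding this difference leaves the bilinear cross term $\lr{B\Psi(\La)}{w+\al B\lpom{\La}}$ together with $Q(w+\al B\lpom{\La})$.

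The decisive step is the vanishing of the part linear in $w$. Because $B\lpom{\La}=J^{-1}T'(0)\lpom{\La}\in X$ (the Remark after Assumption \ref{ass:bddst} together with Assumption \ref{ass:bddst}(ii)), one can write $B\Psi(\La)=B\lpom{\La}+\sigma(\La)B(B\lpom{\La})$, and the orthogonality relation $\lr{w}{B\lpom{\La}}=0$ of Lemma \ref{lem:modul} annihilates $\lr{B\lpom{\La}}{w}$, so the entire first-order contribution of $w$ is $\sigma(\La)\lr{B(B\lpom{\La})}{w}=O(|\sigma(\La)|\,||w||_X)$. Gathering what is left, the equality $Q(M(u))=Q(\Psi(\La))$ rearranges into
\begin{eqnarray*}
\al\Big(||B\lpom{\La}||_H^2+r(u)\Big)=-\sigma(\La)\lr{B(B\lpom{\La})}{w}-Q(w),
\end{eqnarray*}
where $r(u)$ collects the higher-order pieces $\sigma(\La)\lr{B(B\lpom{\La})}{B\lpom{\La}}$, $\lr{Bw}{B\lpom{\La}}$ and $\al\,Q(B\lpom{\La})$, so that $|r(u)|\le C(|\sigma(\La)|+||w||_X+|\al|)$.

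To close the argument I would observe that for $\ep$ small the quantities $|\La(u)|$, $||w(u)||_X$, $|\al(u)|$ and $|\sigma(\La(u))|$ are uniformly small, since they vanish when $u\in T(\R)\pom$ and depend continuously on $u$; hence $r(u)=o(1)$, and by continuity of $\om'\mapsto\phi_{\om'}$ and boundedness of $B$ one has $||B\lpom{\La}||_H^2\to||B\pom||_H^2$, which is strictly positive because $T'(0)\pom\ne0$ (Assumption \ref{ass:bddst}(iii)) and $J^{-1}$ is injective give $B\pom=J^{-1}\Tpom\ne0$. Thus the coefficient of $\al$ is at least $\half||B\pom||_H^2>0$ for $\ep$ small, and estimating the right-hand side via $|Q(w)|\le C||w||_X^2$ (boundedness of $B:X\to X^*$) yields $|\al(u)|\le C(|\sigma(\La(u))|\,||w(u)||_X+||w(u)||_X^2)$, which is the asserted bound (the statement is to be read with $\sigma(\La(u))$ in absolute value). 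There is no genuine obstacle here; the only points requiring attention are verifying that every error term is really second order in $(||w||_X,|\al|,|\sigma|)$ and uniformly small, so the coefficient of $\al$ stays bounded away from zero, and the minor regularity bookkeeping that makes $B(B\lpom{\La})$ and all the dual pairings meaningful — precisely what $B\pom\in X$ provides.
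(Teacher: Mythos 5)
Your proof is correct and follows essentially the same route as the paper: expand the quadratic form $Q$ around $\Psi(\La(u))$ using $Q(u)=Q(\pom)=Q(\Psi(\La(u)))$, kill the leading linear term in $w$ via the orthogonality $\lr{w}{B\lpom{\La(u)}}=0$, and solve for $\al$ against the coefficient $||B\lpom{\La(u)}||_H^2+o(1)$, which stays bounded away from zero. Your added remarks on the sign of $\sigma$ and the nonvanishing of $B\pom$ are correct but only make explicit what the paper leaves implicit.
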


\begin{proof}
We first calculate $Q(u)$. By Lemma \ref{lem:modul} and (\ref{eq:defQ}) (definition of $Q$), we have
\begin{eqnarray*}
Q(\pom)&=& Q(u)\\
&=& Q(\Psi(\La(u))+w(u)+\al(u)B\lpom{\La(u)})\\
&=& Q(\Psi(\La(u)))+Q(w(u)+\al(u)B\lpom{\La(u)})\\&&+\lr{B\lpom{\La(u)}+\sigma(\La(u))B^2\lpom{\La(u)}}{w(u)+\al(u)B\lpom{\La(u)}}\\
&=&Q(\pom)+\al(u)||B\lpom{\La(u)}||_H^2+\sigma(\La(u))\lr{B^2\lpom{\La(u)}}{w(u)}\\&&+\al(u)\sigma(\La(u))\lr{B^2\lpom{\La(u)}}{B\lpom{\La(u)}}+Q(w(u))\\&&+\al(u)\lr{Bw(u)}{B\lpom{\La(u)}}+\al(u)^2Q(B\lpom{\La(u)}).
\end{eqnarray*}
Therefore, we have
\begin{eqnarray*}
-\al(u)\left(||B\pom||_H^2+o(1)\right)=\sigma(\La(u))\lr{B^2\lpom{\La(u)}}{w(u)}+Q(w(u)), \La(u)\rightarrow 0.
\end{eqnarray*}
Thus, we have the conclusion.
\end{proof}

Next, we show that under a restriction of the value of $S_{\om}$ we get a good estimate for $w(u)$ and $\al(u)$.

\begin{Lem}\label{lem:est_w}
Let $\ep>0$ sufficiently small.
Let $a\in\R$.
Suppose $u\in N_{\ep}^0$ and 
\begin{eqnarray*}
S_{\om}(u)-S_{\om}(\pom)\leq  a\eta_1(\La(u))+\frac{k_0}{10}||w(u)||_X^2,
\end{eqnarray*}
where $k_0$ is given in Lemma \ref{lem:pos}.
Then, $||w(u)||_X^2=O(\eta_1(\La(u)))$ as $\La(u)\rightarrow 0$.
In particular, $\al(u)=O(\eta_1(\La(u)))$ as $\La(u)\rightarrow 0$.
\end{Lem}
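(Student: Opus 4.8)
The plan is to expand $S_\om(u)-S_\om(\pom)$ using the decomposition $T(\theta(u))u = \Psi(\La(u)) + w(u) + \al(u)B\lpom{\La(u)}$ from Lemma \ref{lem:modul}, isolate the quadratic form $\lr{\Hom w(u)}{w(u)}$, bound it from below by the spectral gap of Lemma \ref{lem:pos2}, and then absorb the error terms using the hypothesis together with the estimate on $\al(u)$ from Lemma \ref{lem:est_al}. First, by $T$-invariance of $S_\om$ and a Taylor expansion around $\Psi(\La(u))$, writing $\La = \La(u)$, $w = w(u)$, $\al = \al(u)$ and $r = w + \al B\lpom{\La}$,
\begin{eqnarray*}
S_\om(u) &=& S_\om(\Psi(\La)) + \lr{S_\om'(\Psi(\La))}{r} + \tfrac12 \lr{S_\om''(\Psi(\La))r}{r} + o(\|r\|_X^2).
\end{eqnarray*}
By Lemma \ref{lem:S}, $S_\om(\Psi(\La)) - S_\om(\pom) = \eta_1(\La) + o(\eta_1(\La))$. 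For the linear term, $S_\om'(\Psi(\La)) = S_{\om+\La}'(\Psi(\La)) + \La B\Psi(\La)$; since $S_{\om+\La}'(\phi_{\om+\La}) = 0$, one gets $S_{\om+\La}'(\Psi(\La)) = \sigma(\La)S_{\om+\La}''(\phi_{\om+\La})B\phi_{\om+\La} + o(\sigma(\La)) = O(\sigma(\La)) = O(\eta_2(\La))$, and the $\La B\Psi(\La)$ contribution against $r$ is handled using $\lr{w}{B\lpom{\La}}=0$ from Lemma \ref{lem:modul}, leaving terms of size $O(|\La|\,\|w\|_X\cdot\text{(small)})$ plus $O(|\La|\,|\al|)$. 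For the quadratic term, replace $S_\om''(\Psi(\La))$ by $\Hom$ up to $o(\|w\|_X^2)$ errors (continuity of $\om'\mapsto\phi_{\om'}$ and of $\Psi$), and note $\lr{\Hom w}{B\lpom{\La}}$ is controlled because $B\pom = \Hom\ppom$ by (\ref{eq:SB}) and $\lr{w}{\ppom}=0$, so the cross term $\al\lr{\Hom w}{B\lpom{\La}}$ is itself $o(\|w\|_X)\cdot|\al|$; the pure $\al^2$ piece is $O(\al^2)$.

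Collecting, there is a constant $C_0$ and a function $o(1)$ (as $\La\to0$) so that
\begin{eqnarray*}
S_\om(u) - S_\om(\pom) \ \geq\ \eta_1(\La) - C_0|\eta_1(\La)| + \tfrac12\lr{\Hom w}{w} - o(1)\|w\|_X^2 - C_0|\al|\,\|w\|_X - C_0\al^2.
\end{eqnarray*}
Now apply Lemma \ref{lem:pos2}: since $w$ is orthogonal to $T'(0)\phi_{\om+\La}$, $\lpppom$ wait — to $\lppom{\La}$ and $B\lpom{\La}$ — we have $\lr{\Hom w}{w}\geq \tfrac12 k_0\|w\|_X^2$ for $|\La|$ small. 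Feeding in the hypothesis $S_\om(u)-S_\om(\pom)\leq a\eta_1(\La) + \tfrac{k_0}{10}\|w\|_X^2$ and rearranging yields
\begin{eqnarray*}
\left(\tfrac14 k_0 - \tfrac{k_0}{10} - o(1)\right)\|w\|_X^2 \ \leq\ (a-1)\eta_1(\La) + C_0|\eta_1(\La)| + C_0|\al|\,\|w\|_X + C_0\al^2,
\end{eqnarray*}
and the coefficient on the left is bounded below by a positive constant for $\ep$ small. Finally, by Lemma \ref{lem:est_al}, $|\al| \leq C(\sigma(\La)\|w\|_X + \|w\|_X^2) = C(o(1)\|w\|_X + \|w\|_X^2)$, so $C_0|\al|\,\|w\|_X + C_0\al^2 = o(1)\|w\|_X^2$; absorbing this into the left side gives $c_1\|w\|_X^2 \leq C_2|\eta_1(\La)|$ for $|\La|$ small, i.e. $\|w(u)\|_X^2 = O(\eta_1(\La(u)))$. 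The claim $\al(u) = O(\eta_1(\La(u)))$ then follows by plugging this back into Lemma \ref{lem:est_al}: $|\al| \leq C(\sigma(\La)\|w\|_X + \|w\|_X^2)$, and both $\sigma(\La)\|w\|_X = O(\eta_2(\La)\cdot|\eta_1(\La)|^{1/2})$ and $\|w\|_X^2 = O(\eta_1(\La))$ are $O(\eta_1(\La))$ once one uses (\ref{eq:comparable}) to compare $\eta_2(\La)|\eta_1(\La)|^{1/2}$ with $\eta_1(\La)$ (here $\eta_1 \sim \La\eta_2$ and $\eta_2 = o(1)$ give $\eta_2|\eta_1|^{1/2} = o(|\eta_1|^{1/2}|\La|^{1/2}|\eta_2|^{1/2})$, which is $o(\eta_1)$ modulo signs — the bookkeeping of signs of $\eta_1,\eta_2$ being the place to be careful).

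The main obstacle I expect is the sign bookkeeping and the interplay of the three small scales $\|w\|_X$, $\sigma(\La)\sim\eta_2(\La)$, and $\eta_1(\La)$: one must be sure that every error term genuinely beats the spectral-gap term $\tfrac12\lr{\Hom w}{w}$ (hence the careful separation of the degenerate directions $\ppom$, $B\pom$ from $w$), and that the cross terms involving $\al$ really are $o(\|w\|_X^2)$ rather than merely $O(\|w\|_X^2)$ — this is exactly where Lemma \ref{lem:est_al} is indispensable. A secondary subtlety is justifying that $S_\om''(\Psi(\La))$ may be replaced by $\Hom = S_\om''(\pom)$ with only $o(\|w\|_X^2)$ cost, which uses the $C^3$ (hence $C^2$ for $S''$) regularity of $\om'\mapsto\phi_{\om'}$ and smallness of $\La$ and $\sigma(\La)$.
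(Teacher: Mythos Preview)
Your direct expansion is workable and uses the same ingredients (Lemmas \ref{lem:S}, \ref{lem:est_al}, \ref{lem:pos2} and the Taylor expansion around $\Psi(\La)$), but your ``Collecting'' inequality has a genuine gap: the linear contribution
\[
\lr{S_{\om+\La}'(\Psi(\La))}{w}\;=\;O(\sigma(\La))\,\|w\|_X\;=\;O(\eta_2(\La))\,\|w\|_X
\]
does not appear among the terms you kept. You have written it as $O(\eta_2(\La))$ and then silently dropped it; but a term of order $|\eta_2(\La)|\,\|w\|_X$ is \emph{a priori} neither $o(\|w\|_X^2)$ (you have no relation yet between $\eta_2(\La)$ and $\|w\|_X$) nor $O(|\eta_1(\La)|)$, so it cannot be absorbed into either the ``$-o(1)\|w\|_X^2$'' or the ``$-C_0|\eta_1(\La)|$'' term in your display. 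This is precisely the delicate point of the lemma. You can repair the direct argument with Young's inequality, $|\eta_2|\,\|w\|_X\le \delta\|w\|_X^2+C_\delta\,\eta_2^2$, together with the observation that (\ref{eq:comparable}) and $\eta_2(\La)=o(\La)$ give $\eta_2(\La)^2\sim \eta_1(\La)^2/\La^2=|\eta_1(\La)|\cdot o(1)$; then the right side becomes $\delta\|w\|_X^2+o(|\eta_1(\La)|)$ and closes. The same remark applies to the cross term $O(|\La|\,|\al|)$, which you also need to feed through Lemma \ref{lem:est_al} before it becomes $o(\|w\|_X^2)$.

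For comparison, the paper argues by contradiction: it takes a sequence $u_n\to\pom$ with $\|w_n\|_X^2=C_n\eta_1(\La_n)$, $C_n\to\infty$, and uses (\ref{eq:comparable}) to show directly that $\sigma(\La_n)\sim\eta_2(\La_n)=o(\|w_n\|_X)$. Under this hypothesis \emph{every} error term---including the linear one above---is $o(\|w_n\|_X^2)$, so the expansion collapses to $\tfrac12\lr{\Hom w_n}{w_n}\ge\tfrac{k_0}{4}\|w_n\|_X^2$, which contradicts the assumption since $a\eta_1(\La_n)=o(\|w_n\|_X^2)$ and $\tfrac{k_0}{8}>\tfrac{k_0}{10}$. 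The contradiction route thus avoids the Young-inequality bookkeeping and the simultaneous tracking of three scales that you flagged as the main obstacle.
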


\begin{proof}
Suppose there exists $u_n\in N_{\ep}^0$, $u_n\rightarrow\pom$ in $X$, s.t. \begin{eqnarray*}
S_{\om}(u_n)-S_{\om}(\pom)\leq  a\eta_1(\La_n)+\frac{k_0}{10}||w_n||_X^2,
\end{eqnarray*}
and $||w_n||_X^2=C_n\eta_1(\La_n)$,
where $w_n=w(u_n)$, $\La_n=\La(u_n)$, $\al_n=\al(u_n)$ and $C_n\rightarrow\infty$.
Then, we have $\eta_1(\La_n)=o(||w_n||_X^2)$.
Further, by Lemma \ref{lem:defsigma}, (\ref{eq:comparable}) and assumption of contraditicon, we have
\begin{eqnarray*}
\sigma(\La_n)\sim\eta_2(\La_n)\sim\frac{\eta_1(\La_n)}{\La_n}=\frac{||w_n||_X^2}{\La_nC_n}=\frac{\eta_1^{1/2}(\La_n)}{\La_nC_n^{1/2}}||w_n||_X=o(||w_n||_X),\ n\rightarrow\infty,
\end{eqnarray*}
where we have used 
``$\sim$'' in the sense of $(\ref{eq:defsim})$.
Thus, by Lemma \ref{lem:est_al}, $\al_n=O(||w_n||_X^2)$.
Now, by Taylor expansion and Lemma \ref{lem:modul},
\begin{eqnarray*}
\Som(u_n)-\Som(\pom)&=&\Som(\Psi(\La_n)+w_n+\al_nB\lpom{\La_n})-\Som(\pom)\\
&=& \Som(\Psi(\La_n))-\Som(\pom)+\lr{\Som'(\Psi(\La_n))}{w_n+\al_nB\lpom{\La_n}}\\&&+\half\lr{\Som''(\Psi(\La_n))w_n}{w_n}+o(||w_n||_X^2),\ n\rightarrow\infty.
\end{eqnarray*}
Further, by Lemma \ref{lem:S} and $S_{\om}'(\pom)=0$, we have
\begin{eqnarray*}
\Som(\Psi(\La_n))-\Som(\pom)&=&O(\eta_1(\La_n))=o(||w_n||_X^2),\\
\lr{\Som'(\Psi(\La_n))}{\al_nB\lpom{\La_n}}&=&o(||w_n||_X^2),\ n\rightarrow\infty,
\end{eqnarray*}
and by $S_{\om}'=S_{\om+\la}'+\la B$, $\lr{B\lpom{\La_n}}{w_n}=0$ and $\sigma(\La_n)=o(||w_n||_X)$ as $n\rightarrow\infty$, we have
\begin{eqnarray*}
\lr{\Som'(\Psi(\La_n))}{w_n}&=&\lr{S_{\om+\La_n}'(\Psi(\La_n))+B\Psi(\La_n)}{w_n}\\
&=&\lr{S_{\om+\La_n}'(\Psi(\La_n))+\sigma(\La_n)B\lpom{\La_n}}{w_n}\\
&=&o(||w_n||_X^2),\ n\rightarrow\infty.
\end{eqnarray*}
Therefore, by Lemma \ref{lem:pos2}, we have
\begin{eqnarray*}
\Som(u_n)-\Som(\pom)&=&\half\lr{\Som''(\Psi(\La_n))w_n}{w_n}+o(||w_n||_X^2)\\
&\geq &\frac{k_0}{4}||w_n||_X^2+o(||w_n||_X^2)\\
&\geq &\frac{k_0}{8}||w_n||_X^2,
\end{eqnarray*}
for sufficiently large $n$.
This contradicts to the assumption.
Therefore, we have the conclusion.
\end{proof}

\begin{proof}[Proof of Theorem \ref{thm:stability}]
Let $u\in N_{\ep}^0$.
Suppose, $S_{\om}(u)-S_{\om}(\pom)< \eta_1(\La(u))+\frac{k_0}{10}||w(u)||_X^2$.
Then, by Lemma \ref{lem:est_w}, we have $||w(u)||_X^2=O(\eta_1(u))$, $\La(u)\rightarrow 0$.
Now,
\begin{eqnarray*}
\Som(u)-\Som(\pom)&=&\Som(\Psi(\La(u))+w(u)+\al(u)B\lpom{\La(u)})\\
&=&\eta_1(\La(u))+\lr{\Som'(\Psi(\La(u)))}{w(u)}+\half\lr{\Som''(\pom)w(u)}{w(u)}\\&&+o(\eta_1(\La(u))).
\end{eqnarray*}
Using $S_{\om}'=S_{\om+\la}'+B$, $\sigma(\La(u))=O(\eta_2(\La(u)))$ and $||w(u)||_X=O(\eta_1(\La(u))^{1/2})$,
\begin{eqnarray*}
\lr{\Som'(\Psi(\La(u)))}{w(u)}&=&\lr{S_{\om+\La(u)}'(\Psi(\La(u)))+\La(u)B\Psi(\La(u))}{w(u)}\\
&=&\sigma(\La(u))\lr{S_{\om}''(\lpom{\La(u)})B\lpom{\La(u)}}{w(u)}\\&&+\La(u)\sigma(\La(u))\lr{B^2\pom}{w(u)}\\
&=&o(\eta_1(\La(u))).
\end{eqnarray*}
Since we have assumed that $d$ is strictly convex in an open neighborhood of $\om$, $\eta_2(\la)$ is strictly increasing in an open neighborhood of $0$ (if $\eta_2(\la)$ is not increasing, then $d$ would not be convex, if $\eta_2(\la)$ is constant, then $d$ would not be strictly convex).
So, we have
\begin{eqnarray*}
\Som(u)-\Som(\pom)\geq c\La(u)\eta_2(\La(u))+\frac{k_0}{4}||w||_X^2,
\end{eqnarray*}
for a constant $c>0$.

Now, suppose that there exists a sequence of solutions $u_n$, and $t_n>0$ s.t. $u_n\rightarrow \pom$ in $X$ and $\inf_{s\in\R}||u_n(t_n)-T(s)\pom||_X=\ep_0/10$.
Take
\begin{eqnarray*}
v_n:=\sqrt{Q(\pom)/Q(u_n)}u_n(t_n).
\end{eqnarray*}
Since $\sqrt{Q(\pom)/Q(u_n)}\rightarrow 1$, we have $||v_n-u_n(t_n)||_X\rightarrow 0$ and $S_{\om}(v_n)-S_{\om}(\pom)\rightarrow 0$.
Thus, $\La(v_n)$, $w(v_n)$ and $\al(v_n)$ converges to zero.
This implies 
\begin{eqnarray*}
\inf_{s\in\R}||u_n(t_n)-T(s)\pom||_X\rightarrow 0.
\end{eqnarray*}
This is a contradiction.
\end{proof}

We next show Theorem \ref{thm:instability}.
We first calculate $P$.

\begin{Lem}\label{lem:P}
Let $\ep>0$, sufficiently small.
Let $u\in N_{\ep}^0$ and $\Som(u)-\Som(\pom)<0$.
Further, assume $\lr{\lppom{\La(u)}}{J^{-1}\pd_{\om}\lpom{\La(u)}}=0$.
Then
\begin{eqnarray*}
P(u)=\eta_2(\La(u))+o(\eta_2(\La(u))).
\end{eqnarray*}
\end{Lem}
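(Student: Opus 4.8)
The plan is to mimic the computation of $P(\Psi(\la))$ from the earlier lemma, but now at a general point $u\in N_\ep^0$ lying just below the bound state in the sense $\Som(u)-\Som(\pom)<0$. The decomposition of Lemma \ref{lem:modul} gives $M(u)=\Psi(\La(u))+w(u)+\al(u)B\lpom{\La(u)}$, and since $\Som(u)-\Som(\pom)<0\le \eta_1(\La(u))+\frac{k_0}{10}\|w(u)\|_X^2$ we may invoke Lemma \ref{lem:est_w} (with $a=0$, say) to get the crucial size estimates $\|w(u)\|_X^2=O(\eta_1(\La(u)))$ and $\al(u)=O(\eta_1(\La(u)))$ as $\La(u)\to 0$. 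Combined with $\eta_1(\La)\sim\La\eta_2(\La)=o(\La^2)$ and $\sigma(\La)\sim\eta_2(\La)=o(\La)$, these say that $w(u)$, $\al(u)$ and $\sigma(\La(u))$ are all small compared to the scales that govern the leading term $\eta_2(\La(u))$; in particular $\|w(u)\|_X=O(\eta_1(\La(u))^{1/2})=o(\eta_2(\La(u))^{1/2}\cdot\La(u)^{1/2})$, so error terms quadratic in $w$ will be $o(\eta_2(\La(u)))$ once a factor is harvested, and this is what must be tracked carefully.

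The computation then splits into the two factors of $P(u)=\lr{S_{\om+\La(u)}'(u)}{JA'(u)}$. First, expand $S_{\om+\La(u)}'(u)$ around $\phi_{\om+\La(u)}$: writing $u=T(-\theta(u))\bigl(\Psi(\La(u))+w(u)+\al(u)B\lpom{\La(u)}\bigr)$ and using $T$-equivariance of $S'$, one gets $S_{\om+\La(u)}'(u)=T(-\theta(u))\bigl[\sigma(\La(u))S_{\om+\La(u)}''(\lpom{\La(u)})B\lpom{\La(u)}+S_{\om+\La(u)}''(\lpom{\La(u)})w(u)+(\text{higher order})\bigr]$, where I used $S_{\om+\La(u)}'(\lpom{\La(u)})=0$. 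Second, from the formula \eqref{eq:A'} for $A'$, the hypotheses $d''(\om)=0$ (hence $\lr{B\lpom{\La}}{\lppom{\La}}\to0$) and $\lr{\lppom{\La(u)}}{J^{-1}\pd_{\om}\lpom{\La(u)}}=0$, together with $\theta'(u),\La'(u)$ bounded and $\|w(u)\|_X\to0$, give $JA'(u)=\lppom{\La(u)}+o(1)$ exactly as in the $\Psi$-case. Pairing the two factors and using the identity $S_{\om+\La}''(\lpom{\La})\lppom{\La}=B\lpom{\La}$ together with $\|B\lpom{\La}\|_H^2=\|B\pom\|_H^2+o(1)$ and Lemma \ref{lem:defsigma} ($\sigma(\La)\|B\pom\|_H^2=\eta_2(\La)+o(\eta_2(\La))$), the leading term is $\sigma(\La(u))\|B\lpom{\La(u)}\|_H^2+\bigl(\text{cross term in }w(u)\bigr)=\eta_2(\La(u))+o(\eta_2(\La(u)))$.

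The main obstacle is controlling the cross term $\lr{S_{\om+\La(u)}''(\lpom{\La(u)})w(u)}{\lppom{\La(u)}}$, which is linear in $w(u)$ and hence only $O(\|w(u)\|_X)=O(\eta_1(\La(u))^{1/2})$ — this is \emph{not} automatically $o(\eta_2(\La(u)))$. To kill it I would use the orthogonality built into the decomposition: by self-adjointness, $\lr{S_{\om+\La(u)}''(\lpom{\La(u)})w(u)}{\lppom{\La(u)}}=\lr{w(u)}{S_{\om+\La(u)}''(\lpom{\La(u)})\lppom{\La(u)}}=\lr{w(u)}{B\lpom{\La(u)}}=0$ by Lemma \ref{lem:modul}. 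Thus the linear-in-$w$ contribution vanishes identically, the $\sigma(\La(u))\cdot w(u)$ term is $o(\eta_2(\La(u)))$ after pairing, and only the genuinely quadratic and higher remainders survive, all $o(\eta_2(\La(u)))$ by the size bounds above. The remaining care is purely bookkeeping: checking that every product of two small quantities ($\sigma^2$, $\sigma\al$, $\al$, $\|w\|_X^2$, and the $o(1)$ pieces of $JA'(u)$) is indeed $o(\eta_2(\La(u)))$, which follows from $\eta_1=o(\La\eta_2)$ wait, rather $\eta_1\sim\La\eta_2$, $\sigma\sim\eta_2$, $\al=O(\eta_1)=O(\La\eta_2)=o(\eta_2)$, and $\|w\|_X^2=O(\eta_1)=o(\eta_2)$.
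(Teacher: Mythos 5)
Your overall route is the paper's: decompose $u$ via Lemma \ref{lem:modul}, invoke Lemma \ref{lem:est_w} (with $a=0$ the hypothesis $\Som(u)-\Som(\pom)<0$ does put you in its scope) to get $\|w(u)\|_X^2=O(\eta_1(\La(u)))$ and $\al(u)=O(\eta_1(\La(u)))$, expand $P$ to first order, and kill the dangerous linear-in-$w$ cross term by self-adjointness together with $S_{\om+\La}''(\lpom{\La})\lppom{\La}=B\lpom{\La}$ and $\lr{w(u)}{B\lpom{\La(u)}}=0$. Identifying that cross term as the main obstacle, and this orthogonality as the cure, is exactly right and matches the paper.

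There is, however, one step in your bookkeeping that does not close as written. You reduce $JA'(u)$ to $\lppom{\La(u)}+o(1)$ and then assert that every remaining product of two small quantities is $o(\eta_2(\La(u)))$. But the linear-in-$w$ part of $S_{\om+\La(u)}'(u)$ has size $O(\|w(u)\|_X)=O(\eta_1(\La(u))^{1/2})=O\bigl((\La(u)\,\eta_2(\La(u)))^{1/2}\bigr)$, which is \emph{much larger} than $\eta_2(\La(u))$, since $\eta_2(\la)=o(\la)$ forces $(\la\eta_2(\la))^{1/2}/\eta_2(\la)\to\infty$. Pairing it against an unquantified $o(1)$ correction to $JA'(u)$ therefore gives $o(1)\cdot O(\eta_1^{1/2})$, which is not $o(\eta_2)$ on size grounds; your closing list ($\sigma^2$, $\sigma\al$, $\al$, $\|w\|_X^2$) never covers this product. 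The offending correction is the $\theta'$-term of (\ref{eq:A'}), whose coefficient $\lr{T'(0)M(u)}{J^{-1}\lppom{\La(u)}}$ tends to $0$ only like $d''(\om+\La(u))$, and that decay rate is not comparable to $\eta_2$ in general. The paper escapes this by Taylor-expanding $P$ around $\Psi(\La(u))$ rather than splitting the two factors at $u$: then $A'$ is evaluated at $\Psi(\La(u))$, where $\theta'(\Psi(\La))$ is, by the implicit-function system defining $(\theta,\La)$, an explicit linear combination of $T'(0)\lpom{\La}$ and $\lppom{\La}$; hence $S_{\om+\La}''(\lpom{\La})\theta'(\Psi(\La))$ lies in $\mathrm{span}\{0,\,B\lpom{\La}\}$ and its pairing with $w(u)$ vanishes by the very orthogonality you already used. (The $\La'$-term is harmless: the technical hypothesis kills the leading part of its coefficient $\lr{M(u)}{J^{-1}\pd_{\om}^2\phi_{\om+\La(u)}}$, leaving $O(\eta_1^{1/2})$, so that product is $O(\eta_1)=o(\eta_2)$.) You need to run the structural argument once more on the $\theta'$-term; smallness alone does not suffice there.
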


\begin{proof}
By Taylor expansion,
\begin{eqnarray*}
P(u)&=&P(\Psi(\La(u))+w(u))+o(\eta_2(\La(u)))\\
&=&\eta_2(\La(u))+\lr{S_{\om+\La(u)}''(\Psi(\La(u)))w(u)}{JA'(\Psi(\La(u)))}\\&&+\lr{S_{\om+\La(u)}'(\Psi(\La(u)))}{JA''(\Psi(\La(u)))w(u)}+o(\eta_2(\La(u)))\\
&=&\eta_2(\La(u))+\lr{S_{\om+\La(u)}''(\Psi(\La(u)))w(u)}{JA'(\Psi(\La(u)))}\\&&+o(\eta_2(\La(u))),\ \La(u)\rightarrow 0,
\end{eqnarray*}
where we have used $||w(u)||_X^2=o(\eta_2(\La(u)))$ and $S_{\om+\La(u)}'(\Psi(\La(u)))=O(\eta_2(\La(u)))$.
Now, by (\ref{eq:A'}),
\begin{eqnarray*}
JA'(\Psi(\La(u)))&=&\lppom{\La(u)}-\lr{B\lpom{\La(u)}}{\lppom{\La(u)}}\theta'(\Psi(\La(u)))\\&&+\lr{\lppom{\La(u)}}{J^{-1}\pd_{\om}\lpom{\La(u)}}\La'(\Psi(\La(u)))\\&&+O(\eta_2(\La(u))), \ \La(u)\rightarrow 0,
\end{eqnarray*}
where we have used Lemma \ref{lem:defsigma}.
Now, by $\lr{\lppom{\La(u)}}{J^{-1}\pd_{\om}\lpom{\La(u)}}=0$, 
$\lr{w(u)}{B\lpom{\La(u)}}=0$ and
 (\ref{eq:SB}),
 we have
\begin{eqnarray*}
&&\lr{S_{\om+\La(u)}''(\Psi(\La(u)))w(u)}{JA'(\Psi(\La(u)))}=\lr{S_{\om+\La(u)}''(\lpom{\La(u)})w(u)}{\lppom{\La(u)}}\\&&-\lr{B\lpom{\La(u)}}{\lppom{\La(u)}}\lr{w(u)}{S_{\om+\La(u)}''(\lpom{\La(u)})\theta'(\Psi(\La(u)))}+o(\eta_2(\La(u)))\\
&=&o(\eta_2(\La(u))),\ \La(u)\rightarrow0,
\end{eqnarray*}
where we have used $||w||_X=O(\eta_1(\La(u))^{1/2})$ and $\theta'(\Psi(\La(u)))$ is a linear combination of $\lppom{\La(u)}$ and $T'(0)\lpom{\La(u)}$ because of (\ref{eq:G'}).
Therefore, we have the conclusion.
\end{proof}

\begin{proof}[Proof of Theorem \ref{thm:instability}]
By the assumption of Theorem \ref{thm:instability}, we have $\eta_1(\la)<0$ in a one-sided open neighborhood of $0$.
Therefore, by Lemma \ref{lem:S}, we can take the initial data from $\Psi(\la_n)$, where $S(\Psi(\la_n))<S(\pom)$ and $\la_n\rightarrow 0$.
Suppose, $u_n$ stays in $N_{\ep}^0$.
By the conservation of $E$ and $Q$, we have 
\begin{eqnarray*}
\Som(u_n(t))-\Som(\pom)=\eta_1(\La(u_n(t)))+o(\eta_1(\La(u_n(t)))),
\end{eqnarray*}
and by Lemma \ref{lem:P},
\begin{eqnarray*}
P(u(t))=\eta_2(\La(u_n(t)))+o(\eta_2(\La(u_n(t)))).
\end{eqnarray*}
Then, since $\la\eta_2(\la)\sim \eta_1(\la)$, we have
\begin{eqnarray*}
\Som(\pom)-\Som(u_n(t))\leq C|\La(u_n(t)) P(u_n(t))|,
\end{eqnarray*}
for some constant $C>0$.
Thus, we have $0<\delta<|P(u_n(t))|$ for arbitrary $t$.
So, $P$ has same sign.
Suppose $P>0$.
Then, $\frac{dA}{dt}(u_n(t))>P(u_n(t))>\delta$.
Thus, $A$ is unbounded.
However, this is contradiction.
For the case $P<0$ we have the same conclusion.
\end{proof}

\section{Examples}\label{sec:e}
\subsection{The nonlinear Klein-Gordon equations}
We consider the following single power nonlinear Klein-Gordon (NLKG) equation.
\begin{eqnarray}\label{eq:nlkg}
u_{tt}-\De u +u -|u|^{p-1}u=0, (x,t)\in\R^d,
\end{eqnarray}
where $d\geq 1$ and $1<p<\infty$ for $d=1,2$ and $1<p<1+4/(d-2)$ for $d\geq 3$.
To put (\ref{eq:nlkg}) on to our setting, set
$X=H_r^1(\R^d)\times L_r^2(\R^d)$, $Y=L^2_r(\R^d)\times H^1_r(\R^d)$ and $H=(L_r^2(\R^d))^2$, where $H^1_r$ and $L^2_r$ are subspaces of $H^1$ and $L^2$ which consists with radial functions.
Then define $J$ and $E$ as
\begin{eqnarray*}
J&=&\left(
  \begin{matrix}
        0 & 1 \cr
        -1 & 0 \cr
  \end{matrix}
\right),\\
E(U)&=&\half\int |v|^2+|\nabla u|^2+|u|^2-\frac{1}{p+1}\int |u|^{p+1}.
\end{eqnarray*}
Then, $J:H\rightarrow H$ is skew symmetric, and $J|_X:X\rightarrow Y$, $J|_Y:Y\rightarrow X$ are bounded and bijective.
Furhter, $E$ is $C^2$.
Let $U=(u,v)^t$, where $t$ means transposition.
Then NLKG equation is rewritten as
\begin{eqnarray*}
\frac{d}{dt}U=JE'(U)
\end{eqnarray*}
in $Y^*$.
Further, in this case, we take $T(s)=e^{is}I$, where $I$ is the identity matrix.
So, we have
$Q(u)=\mathrm{Im}\int \bar{u}u_t$.
From the results of Ginibre and Velo \cite{GV85}, it is known that NLKG equation is locally well-posed and $E$ and $Q$ are conserved (i.e.\ Assumption \ref{ass:sol} is satisfied).
For, $\om^2<1$, let $\pom$ be the unique positive radial solution of
\begin{eqnarray*}
0=-\De \pom +(1-\om^2)\pom-\pom^p.
\end{eqnarray*}
Then, $e^{i\om t}\pom$ is the solution of (\ref{eq:nlkg}).
It is well known that $\phi\in {\cal S}(\R^d)$, where ${\cal S}(\R^d)$ is the Schwartz space (see for example Chapter B of \cite{TaoBook}).
Further, by scaling, we have $\phi_{\om}=(1-\om^2)^{1/(p-1)}\phi_0((1-\om^2)^{1/2}x)$.
Therefore, it is easy to check $\om \mapsto \pom$ satisfies Assumption \ref{ass:bddst}.
Further, Assumption \ref{ass:spec} is also well known to be satisfied (see for example \cite{Weinstein85}).


Now, since $\phi_{\om}=(1-\om^2)^{1/(p-1)}\phi_0((1-\om^2)^{1/2}x)$, we can calculate $d$ directly.
Since $Q(\pom)=\om\int\pom^2$, we have
\begin{eqnarray*}
d''(\om)=-\left(1-(1+\frac{4}{p-1}-d)\om^2\right)(1-\om^2)^{\frac{2}{p-1}-\frac{d}{2}-1}\int_{\R^d}\phi_0^2.
\end{eqnarray*}
So, we see that for the case $p>1+4/d$, then $d''(\om)<0$ for all $\om\in(-1,1)$ and for the case $1<p<1+4/d$, there exists
\begin{eqnarray*}
0<\om_*=\sqrt{\frac{p-1}{4-(d-1)(p-1)}}<1,
\end{eqnarray*}
such that if $|\om|<\om_*$, then $d''(\om)<0$ and if $|\om|>\om_*$, then $d''(\om)>0$.
Therefore, in these case we know the stability and instability.
These are the results by \cite{Shatah83} and \cite{ShatahStrauss85}.

For the case $\om=\pm\om_*$, we can show $d'''(\om_*)\neq 0$, so by Corollary \ref{cor:instability}, we see that in this case, we have the instability.

We have to remark that for the case $d\geq 2$, this result was proved by Ohta and Todorova \cite{OT07} and for the case $d=1$, $p\geq 2$, one can prove this result by applying Comech and Pelinovsky's result \cite{CP03} (for the case $1<p<2$, it seems that the Assumption 2.10 (b) of \cite{CP03} is not satisfied).
Therefore, for $1<p<2$, $d=1$, this result seems to be new.
Further, our proof, the proof of \cite{OT07} and the proof of \cite{CP03} are completely different from each other and our proof gives a simple and unified proof for the critical case.

\subsection{The nonlinear Schr\"odinger equations}
We next consider the double power nonlinear Schr\"odinger equations.
\begin{eqnarray*}
iu_t+\pd_{xx}u+a_1|u|^{p_1-1}u+a_2|u|^{p_2-1}u, (t,x)\in\R^2,
\end{eqnarray*}
where $a_1, a_2\in\R$ and $1<p_1<p_2<\infty$.
In this case, let $X=Y=H^1_r(\R)$, $H=L^2_r(\R)$, $J=i$, $T(s)=e^{is}$ and
\begin{eqnarray*}
E(u)=\half\int_{\R}|u_x|^2\,dx-\frac{a_1}{p_1+1}\int_{\R}|u|^{p_1+1}\,dx-\frac{a_1}{p_1+1}\int_{\R}|u|^{p_1+1}\,dx.
\end{eqnarray*}
Then, we are on the setting of our theory.
In this case, by the combination of $a_1$, $a_2$, it is known that there exists some $\om>0$ such that $d''(\om)=0$ and $d'''(\om)\neq 0$ (See \cite{M08}).
So, for such $\om>0$, we can show the instability.

\vglue 1\baselineskip
\noindent
{\bf Acknowledgments.}
The author wants to thank the helpful discussions with Professor Masahito Ohta.
The author would also like to express his deep gratitude to Professor Yoshio Tsutsumi for his helpful comments.
The author was partially supported by Grant-in-Aid for JSPS Fellows (20$\cdot$56371).


\def\cprime{$'$}
\providecommand{\bysame}{\leavevmode\hbox to3em{\hrulefill}\thinspace}
\providecommand{\MR}{\relax\ifhmode\unskip\space\fi MR }
\providecommand{\MRhref}[2]{%
  \href{http://www.ams.org/mathscinet-getitem?mr=#1}{#2}
}
\providecommand{\href}[2]{#2}

\end{document}